\definecolor{darkblue}{rgb}{0.0,0.0,0.6}
\newtheorem{theorem}{Theorem}[section]
\newtheorem{corollary}[theorem]{Corollary}
\newtheorem{lemma}[theorem]{Lemma}
\newtheorem{proposition}[theorem]{Proposition}
\newtheorem{indexedexample}[theorem]{Example}
\theoremstyle{definition}
\newtheorem{indexedclaim}[theorem]{Claim}
\newtheorem{definition}[theorem]{Definition}
\newtheorem*{note}{Note}
\numberwithin{equation}{section}
\title{The Cardinality of Sumsets: Different Summands}
\author{Brendan Murphy, Eyvindur Ari Palsson and Giorgis Petridis}
\date{}
\begin{document}

\pagenumbering{arabic}

\setcounter{section}{0}

\bibliographystyle{plain}

\onehalfspace

\thispagestyle{empty}

\maketitle

\begin{abstract}
We offer a compete answer to the following question on the growth of sumsets in commutative groups. Let $h$ be a positive integer and $A, B_1, B_2, \dots, B_h$ be finite sets in a commutative group. We bound $|A+B_1+\dots+B_h|$ from above in terms of $|A|$, $|A+B_1|, \dots,|A+B_h|$ and $h$. Extremal examples, which demonstrate that the bound is asymptotically sharp in all  parameters, are furthermore provided.
\end{abstract}

\section{Introduction}

Given (non-empty) finite sets $A, B_1, B_2, \dots, B_h$ in a commutative group, their \emph{sumset}  (also referred to as their \emph{Minkowski sum}) is
\[
A+B_1+\dots+B_h=\{a+b_1+\dots+b_h : a\in A, b_i\in B_i \mbox{ for $1\leq i\leq h$}\}.
\] 

We obtain an upper bound on the cardinality of $A+B_1+\dots+B_h$ in terms of $h$ and the cardinalities of $A$ and $A+B_1,\dots, A+B_h$. Note that the question becomes trivial unless some constraints are put on the sets as $|A+B_1+\dots+B_h| \leq |A| |B_1|\dots|B_h|$; and the bound is attained when $A, B_1,\dots, B_h$ are sets of distinct generators of a free commutative group. 

The best known upper bound is as follows.
\begin{theorem}\label{Existing}
Let $h$ and $m$ be positive integers and $\alpha_1,\dots,\alpha_h$ be positive real numbers. Suppose that $A, B_1,\dots,B_h$ are finite sets in a commutative group that satisfy $|A|=m$ and $|A+B_i| \leq \alpha_i m$ for all $1\leq i \leq h$. Then
\[
|A+B_1+\dots+B_h| \leq \alpha_1\dots\alpha_h   m^{2-1/h}.
\]
\end{theorem}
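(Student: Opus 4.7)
The plan is to leverage Ruzsa's sumset triangle inequality
$$|A|^{h-1}\,|Y_1 + \cdots + Y_h| \leq \prod_{i=1}^h |A + Y_i|,$$
valid in any commutative group for finite sets $A, Y_1, \ldots, Y_h$, and to combine it with a careful control of $|A + (B_1 + \cdots + B_h)|$. This inequality is a classical consequence of the Pl\"unnecke graph argument, or, more directly, of Petridis's subset lemma. Applied with $Y_i = B_i$ it gives $|B_1 + \cdots + B_h| \leq \alpha_1 \cdots \alpha_h \cdot m$. Combining with the trivial $|A + S| \leq |A| \cdot |S|$ already produces the estimate $\alpha_1 \cdots \alpha_h \cdot m^2$, which is weaker than the claim by a factor of $m^{1/h}$.

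To recover the missing factor, I would apply Ruzsa's inequality a second time, now with $h+1$ summands (the sets $A, B_1, \ldots, B_h$) and $A$ itself serving as the base set:
$$|A|^h\,|A + B_1 + \cdots + B_h| \leq |A+A|\cdot \prod_{i=1}^h |A + B_i|,$$
which rearranges to $|A + B_1 + \cdots + B_h| \leq |A+A|\cdot \alpha_1\cdots\alpha_h$. The classical Pl\"unnecke--Ruzsa inequality applied to each pair $(A, B_i)$ gives $|A+A| \leq \alpha_i^2\,m$, so the $h$-fold geometric mean yields $|A+A| \leq (\alpha_1 \cdots \alpha_h)^{2/h}\,m$. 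This produces the alternative bound $(\alpha_1 \cdots \alpha_h)^{1 + 2/h}\,m$, which agrees with the target when $\alpha_1 \cdots \alpha_h \sim m^{(h-1)/2}$ but is in general incomparable to it.

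The main obstacle is then to obtain the symmetric form $\alpha_1 \cdots \alpha_h \cdot m^{2-1/h}$ uniformly across all parameter regimes. A natural route is Petridis's subset version of Pl\"unnecke--Ruzsa for different summands, which produces a nonempty $A' \subseteq A$ with $|A' + B_1 + \cdots + B_h| \leq \alpha_1 \cdots \alpha_h \cdot |A'|$; one then lifts this inequality from $A'$ back to $A$ via a Ruzsa covering argument. The $m^{1-1/h}$ factor in the final bound arises as an unavoidable loss in the worst case $|A'| \sim m^{1/h}$, and controlling this loss precisely---so that the lifting neither degrades the dependence on the $\alpha_i$'s nor introduces a larger power of $m$---is the technical heart of the proof.
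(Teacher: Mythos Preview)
Your proposal is not a proof: it is a plan whose ``technical heart'' is explicitly left undone. You correctly obtain one of the two ingredients the paper uses, namely Ruzsa's bound
\[
|B_1+\cdots+B_h|\le \alpha_1\cdots\alpha_h\, m,
\]
but you do not supply the second ingredient, and neither of the two routes you explore reaches the target. The $(h+1)$-summand Ruzsa inequality gives a bound of the wrong shape (as you yourself note), and the covering idea in your final paragraph is only a sketch: you do not say how to cover $A$, you do not control the number of translates, and you do not show why the loss is exactly $m^{1-1/h}$ rather than, say, $m/|A'|$, which in the worst case $|A'|=1$ collapses back to the trivial $\alpha_1\cdots\alpha_h\, m^2$. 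There is no mechanism in a bare Ruzsa covering step that forces $|A'|\sim m^{1/h}$.

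What you are missing is the inequality of Balister and Bollob\'as (Theorem~\ref{BalisterBollobas} in the paper): for any $C\subseteq B_1+\cdots+B_h$,
\[
|A+C|\le (\alpha_1\cdots\alpha_h)^{1/h}\, m\, |C|^{1-1/h}.
\]
This replaces your trivial estimate $|A+C|\le m\,|C|$ by one that saves exactly a factor $|C|^{1/h}$. Taking $C=B_1+\cdots+B_h$ and inserting the Ruzsa bound you already have gives
\[
|A+B_1+\cdots+B_h|\le (\alpha_1\cdots\alpha_h)^{1/h}\, m\, (\alpha_1\cdots\alpha_h\, m)^{1-1/h}=\alpha_1\cdots\alpha_h\, m^{2-1/h},
\]
which is the theorem. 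The exponent $1-1/h$ on $|C|$ is where the $m^{2-1/h}$ comes from, and it is this step---not a covering argument---that produces the correct power of $m$.
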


Theorem \ref{Existing} can be proved by different methods. It can be deduced from the work of Ruzsa in \cite{Ruzsa2006,Ruzsa2009}. It also follows by combining an inequality of Balister and Bollob\'as in \cite{Balister-Bollobas2007} with an inequality of Ruzsa \cite{Ruzsa1989}. Madiman, Marcus and Tetali have given a different proof of the inequality of Balister and Bollob\'as in \cite{MMT2008}. We discuss the various proofs in more detail in Section \ref{History}. It is worth pointing out here that the methods used by the three groups of authors are  different: Ruzsa relied on graph theory; Bollob\'as and Balister on projections; and Madiman, Marcus and Tetali on entropy.  

The upper bound in Theorem \ref{Existing} has the correct dependence on $\alpha$ and $m$. The following example (a modification of similar examples given by Ruzsa in \cite{Ruzsa2006,Ruzsa2009}) demonstrates this.

\begin{indexedexample} \label{Example}
Let $h$ be a positive integer. There exist infinitely many $(\alpha_1,\dots,\alpha_h)\in (\mathbb{Q}^+)^h$ with the following property. For each such $h$-tuple $(\alpha_1,\dots,\alpha_h)$ there exist infinitely many $m$ and sets $A, B_1,\dots,B_h$ in a commutative group with $|A|=m$, $|A+B_i|\leq (1+o(1))  \alpha_i m$ and 
\[
|A+B_1+\dots+B_h| \geq (1+o(1))  \frac{\left( 1- \frac{1}{h} \right)^{h-1}}{h}   \alpha_1\dots\alpha_h  m^{2-1/h}.
\]
The $o(1)$ term is $o_{m\rightarrow\infty}(1)$. 
\end{indexedexample}

We show that the sets in Example \ref{Example} are extremal to this problem by proving a matching upper bound and so settle the question of bounding from above the cardinality of higher sumsets in commutative groups. 

\begin{theorem}\label{Main}
Let $h$ be a positive integer, $\alpha_1,\dots,\alpha_h$ be positive real numbers and $m$ an arbitrarily large integer. Suppose that $A, B_1,\dots,B_h$ are finite sets in a commutative group that satisfy $|A|=m$ and $|A+B_i| \leq \alpha_i m$ for all $1\leq i \leq h$. Then
\begin{align*}
|A+B_1+\dots+B_h|   & \leq  \frac{ \left(1 - \frac{1}{h}\right)^{h-1} }{h}  \alpha_1\dots\alpha_h  \left( m^{2-1/h} + O\left(m^{2-2/h} \right) \right) \\
				& =  \left(1 + o(1)\right) \frac{ \left(1 - \frac{1}{h}\right)^{h-1} }{h}  \alpha_1\dots\alpha_h m^{2-1/h}.
\end{align*}
The $o(1)$ term is $o_{m\rightarrow\infty}(1)$. 
\end{theorem}

\begin{note}
For large $h$ the main term is roughly $ \displaystyle \frac{e^{-1}}{h}   \alpha_1\dots\alpha_h  m^{2-1/h}.$
\end{note}

The proof is a refinement of Ruzsa's graph theoretic approach. The upper bound in Theorem \ref{Main} is submultiplicative with respect to direct products. In other words if one replaces $A$ by, say, its Cartesian product $A\times A = \{(a,a^{\prime}) : a,a^{\prime}\in A\}$ and the $B_i$ by their Cartesian products $B_i\times B_i$, then (after some standard calculations of the form $|(A\times A)+(B\times B)| = |(A+B)\times (A+B)| = |A+B|^2$) one obtains
\[
\frac{\alpha_1\dots\alpha_h}{\sqrt{h}}  |A|^{2-1/h},
\]
which is weaker than what the theorem gives. This particular feature of the upper bound makes using one of the key ingredients in Ruzsa's method, the product trick, more delicate. From a technical point of view this is the greatest difficulty that must be overcome. 

The special case when $B_1=B_2=\dots=B_h$ and $\alpha_1=\dots=\alpha_h=\alpha$ was considered in \cite{GPCaSu}. The sumset $A+B_1+\dots+B_h$ in this case is abbreviated to $A+hB$. The upper bound obtained there is slightly stronger:
\begin{equation}\label{A+hB}
|A+hB| \leq (1+o(1))  \frac{C}{h^2} \alpha^h  m^{2-1/h}
\end{equation}
for an absolute constant $C>0$. The extra factor of $h$ in the denominator can be accounted for by the fact that while $|S_1+\dots+S_h| \leq |S_1| \dots |S_h|$ holds for general sets $S_i$, when the same set $S$ is added to itself one has the stronger inequality $|S+\dots+S|\leq \binom{|S|+h-1}{h}.$ Inequality \eqref{A+hB} probably does not have the correct dependence in $h$ as the largest value of $|A+hB|$ in examples is of the order $h^{-h-1} \alpha^h m^{2-1/h}$. It would be of interest to bridge that gap.

The proof of Theorem \ref{Main} is similar to that of inequality \eqref{A+hB}. There are nonetheless technical differences. Roughly speaking we combine ideas from the proof of inequality \eqref{A+hB} with a strategy used repeatedly in the literature (for example in \cite{GMR2008,Ruzsa2010}) to prove a generalisation of the afore mentioned result of Ruzsa. We could not find a result general enough for our purposes in the literature and so give a detailed proof in Section \ref{Graphs}.      
 
The paper is organised as follows. In Section \ref{History} we discuss the different proofs of Theorem \ref{Existing}. The proof of Theorem \ref{Main} is done in Section \ref{Proof of main}. Example \ref{Example} is described in Section \ref{Examples}. In Section \ref{Graphs} the graph theoretic framework of the proof is developed.

\section[Proof of Theorem~1.1]{Proof of Theorem \ref{Existing}}
\label{History}

Theorem \ref{Existing} follows by combining an inequality of Balister and Bollob\'as with an inequality of Ruzsa.

\begin{theorem}[Balister-Bollob\'as,~\cite{Balister-Bollobas2007}]\label{BalisterBollobas}
Let $h$ and $m$ be positive integers and $\alpha_1,\dots,\alpha_h$ be positive real numbers. Suppose that $A, B_1,\dots,B_h$ are finite sets in a commutative group that satisfy $|A|=m$ and $|A+B_i| \leq \alpha_i m$ for all $1\leq i \leq h$. Then for any subset $C\subseteq B_1+\dots+B_h$
\[
|A+C| \leq (\alpha_1\dots\alpha_h)^{1/h}   m   |C|^{1-1/h}.
\]
\end{theorem}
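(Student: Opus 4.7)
The plan is to prove the equivalent form
\[
|A+C|^h \le |C|^{h-1}\prod_{i=1}^h |A+B_i|,
\]
which upon taking $h$-th roots and inserting $|A+B_i|\le\alpha_i m$ recovers the statement. I would follow a projection/entropy argument exploiting the decomposition $C\subseteq B_1+\dots+B_h$.

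Fix sections $c\mapsto (f_1(c),\dots,f_h(c))\in B_1\times\dots\times B_h$ with $\sum_i f_i(c)=c$, and $u\mapsto (a(u),c(u))\in A\times C$ with $u=a(u)+c(u)$. Taking $U$ uniform on $A+C$ and setting $X_0=a(U)$, $X_i=f_i(c(U))$, the tuple $(X_0,\dots,X_h)$ determines $U$, so $H(U)=\log|A+C|$. The derived variables $Y_i:=X_0+X_i\in A+B_i$ and $V:=X_1+\dots+X_h=c(U)\in C$ satisfy $H(Y_i)\le\log|A+B_i|$ and $H(V)\le\log|C|$. The target inequality follows from the Shearer-type entropy bound
\[
h\, H(U)\le \sum_{i=1}^h H(Y_i) + (h-1)H(V),
\]
which is equivalent to constructing an injection $(A+C)^h\hookrightarrow \prod_i(A+B_i)\times C^{h-1}$.

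The main obstacle is establishing this entropy inequality. A naive Shearer cover by the pair-projections $\{0,i\}$ on $(X_0,Y_1,\dots,Y_h)$ falls short, since each pair $(X_0,Y_i)$ ranges over $A\times(A+B_i)$ and contributes an unwanted factor of $m$. The saving must come from the group identity $\sum_i Y_i = h\,X_0+V$: the $h$ edge-sums $Y_i$ determine $V$ once $X_0$ is fixed, so one of the $h$ copies of $V$ is redundant in a suitable Shearer-style cover. Making this saving rigorous is the technical heart of the Balister--Bollob\'as projection framework for set systems in abelian groups; once in hand, the rest is routine rearrangement. An alternative route would be induction on $h$ combined with Pl\"unnecke--Ruzsa's $|A+X+B_h|\le\alpha_h|A+X|$ applied to a well-chosen $X\subseteq B_1+\dots+B_{h-1}$ with $X+B_h\supseteq C$, but keeping the sharp $|C|^{1-1/h}$ exponent along the induction requires additional care.
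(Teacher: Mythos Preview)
The paper does not supply its own proof of this theorem: it quotes the result from Balister--Bollob\'as and merely remarks that their argument combines an idea of Gyarmati--Matolcsi--Ruzsa with the Bollob\'as--Thomason Box Theorem, and that Madiman--Marcus--Tetali gave an alternative entropy proof. So there is no detailed argument in the paper to compare against; your entropy setup is in the spirit of the Madiman--Marcus--Tetali route the paper alludes to, rather than the projection/Box-Theorem route the paper credits to Balister--Bollob\'as.

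That said, what you have written is not a proof but a reduction to an unproved inequality. You correctly reformulate the target as $|A+C|^h \le |C|^{h-1}\prod_i |A+B_i|$, and your encoding via sections and the random variables $X_0,Y_1,\dots,Y_h,V$ is sensible. But you then state that everything follows from
\[
h\,H(U)\le \sum_{i=1}^h H(Y_i) + (h-1)H(V),
\]
and immediately concede that establishing this is ``the main obstacle'' and ``the technical heart,'' without supplying an argument. This is precisely the content of the theorem; the rest is indeed routine. Note also that the $Y_i$ and $V$ are \emph{not} coordinate projections of $(X_0,\dots,X_h)$ but sums, so Shearer's lemma does not apply directly, and the identity $\sum_i Y_i = hX_0 + V$ you invoke only recovers $X_0$ when multiplication by $h$ is injective in the ambient group --- an assumption you have not justified and which is not part of the hypotheses. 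Your alternative inductive suggestion via Pl\"unnecke--Ruzsa is likewise left as a remark about what ``requires additional care'' rather than carried out. As it stands, the proposal identifies the correct target and a plausible framework but leaves the essential step open.
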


The proof given by Balister and Bollob\'as is  short and elegant. It combines an idea of Gyarmati, Matolcsi and Ruzsa in \cite{GMR2010} with the Box Theorem in \cite{Bollobas-Thomason1995}. Madiman, Marcus and Tetali gave a somewhat different proof based on entropy \cite{MMT2008}. The theorem can also be proved by methods developed by Ruzsa (for example in \cite{Ruzsa2006,Ruzsa2009}). 

To deduce Theorem \ref{Existing} one naturally sets $C=B_1+\dots+B_h$. This gives 
\begin{equation}\label{ExistingInt}
|A+B_1+\dots+B_h| \leq (\alpha_1\dots\alpha_h)^{1/h}   m   |B_1+\dots+B_h|^{1-1/h}.
\end{equation}

We are left with bounding $|B_1+\dots+B_h|$ in terms of $m$ and the $\alpha_i$. Ruzsa achieved this by modifying a graph theoretic method of Pl\"unnecke in \cite{Plunnecke1970}, a variant of which we describe in Section \ref{Graphs}.

\begin{theorem}[Ruzsa,~\cite{Ruzsa1989}]\label{Ruzsa}
Let $h$ and $m$ be positive integers and $\alpha_1,\dots,\alpha_h$ be positive real numbers. Suppose that $A, B_1,\dots,B_h$ are finite sets in a commutative group that satisfy $|A|=m$ and $|A+B_i| \leq \alpha_i m$ for all $1\leq i \leq h$. Then there exists a non-empty subset $\emptyset \neq X\subseteq A$ such that
\[
|X+B_1+\dots+B_h| \leq \alpha_1\dots\alpha_h   |X|.
\]
In particular 
\[
|B_1+\dots+B_h| \leq |X+B_1+\dots+B_h| \leq \alpha_1\dots\alpha_h  |X| \leq \alpha_1\dots\alpha_h  m.
\]
\end{theorem}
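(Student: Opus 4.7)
The plan is to deduce the theorem from a layered-graph Pl\"unnecke-type magnification inequality, which is the subject of Section \ref{Graphs}. First, I would construct a layered directed graph $\ga$ with vertex set $V_0 \sqcup V_1 \sqcup \cdots \sqcup V_h$, where $V_0$ is a labelled copy of $A$ and $V_i$ a labelled copy of $A + B_1 + \cdots + B_i$. Edges go only between consecutive layers: for each $v \in V_{i-1}$ and each $b \in B_i$, include an edge from $v$ to $v + b \in V_i$. Writing $\ga^i(Z)$ for the image in layer $i$ of a set $Z \subseteq V_0$, induction on $i$ gives $\ga^i(Z) = Z + B_1 + \cdots + B_i$ (identifying labelled copies with their underlying sets). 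The graph $\ga$ is commutative in Pl\"unnecke's sense because its edges act by translations and translations commute.

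Next, I would invoke a Pl\"unnecke-type magnification statement for such layered commutative graphs: under the hypotheses $|A + B_i| \leq \alpha_i m$, there exists a nonempty $X \subseteq V_0$ with
$$\frac{|\ga^h(X)|}{|X|} \;\leq\; \alpha_1 \alpha_2 \cdots \alpha_h.$$
In sumset language this reads $|X + B_1 + \cdots + B_h| \leq \alpha_1 \cdots \alpha_h \, |X|$, which is the first assertion of the theorem. The ``in particular'' clause then follows immediately by picking any $x \in X$ and using $|B_1+\cdots+B_h| = |x + B_1+\cdots+B_h| \leq |X + B_1+\cdots+B_h|$ together with $|X| \leq |A| = m$.

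The main obstacle is the layered magnification statement above. In the classical Pl\"unnecke-Ruzsa setting one has a single repeated summand and an identical magnification ratio between each pair of consecutive layers; here the per-layer constraints depend on $i$, and they come only in the form $|A + B_i| \leq \alpha_i |A|$ rather than as clean ratios between consecutive layer sizes $|\ga^i(V_0)|/|\ga^{i-1}(V_0)|$. I would prove the magnification statement by induction on $h$, using Petridis's minimiser trick: at each inductive step, pass to a subset $X \subseteq V_0$ minimising a carefully chosen ratio of sumset sizes, and then exploit the commutativity of translations to transport the minimising property through subsequent layers so that the magnification ratios multiply cleanly. This extends the standard modern proof of Pl\"unnecke-Ruzsa to the asymmetric setting at hand and is precisely the kind of general result that Section \ref{Graphs} is designed to establish.
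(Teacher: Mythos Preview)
Your chain graph with layers $V_i = A + B_1 + \cdots + B_i$ is \emph{not} commutative in Pl\"unnecke's sense when the $B_i$ differ, and this is the central gap. Pl\"unnecke commutativity asks that a two-step path $v \to v' \to v''$ can be rerouted through a \emph{different} intermediate vertex; in your graph the edges into $V_i$ come from $B_i$ while those out of $V_i$ come from $B_{i+1}$, so there is generally no alternate middle vertex available. Concretely, take $A=\{0\}$, $B_1=\{0\}$, $B_2=\{0,1\}$ in $\Z$: then $|V_1|=1$, yet the single vertex of $V_0$ has two grandchildren in $V_2$, so upward commutativity fails outright. The slogan ``edges act by translations and translations commute'' conflates commutativity of the ambient group with the graph-theoretic rerouting property; the former does not imply the latter here. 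This is exactly why the paper (following Ruzsa) builds the \emph{hypercube} addition graph indexed by all subsets $\I\subseteq\{1,\dots,h\}$, with $U_\I = A+\sum_{i\in\I}B_i$: a path that adds $b_i$ then $b_j$ can be rerouted through $v+b_j$, which lives in the complementary component $U_{\I\cup\{j\}}$ of the same layer. That graph is (square) commutative, and Pl\"unnecke's inequality together with the tensor and product tricks then gives $D_h\leq\prod_i\alpha_i$.

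Your proposed fallback, a Petridis-style induction on $h$, also does not go through as sketched. If $X\subseteq A$ minimises $|X+B_1|/|X|$, Petridis's lemma yields $|X+B_1+C|\leq\alpha_1|X+C|$ for every $C$, so with $C=B_2+\cdots+B_h$ one reduces to bounding $|X+B_2+\cdots+B_h|$. But the hypotheses give no control on $|X+B_i|/|X|$ for $i\geq 2$: passing to $X$ may destroy the bounds $|A+B_i|\leq\alpha_i|A|$. One can rescue this line, but doing so requires essentially the same product-with-free-generators trick that drives the hypercube argument; it is not the clean layer-by-layer transport you describe.
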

Substituting the last inequality in inequality \eqref{ExistingInt} gives the bound in Theorem \ref{Existing}.

Theorems \ref{Existing} and \ref{Ruzsa} differ crucially in the exponent of $m$. Ruzsa has shown in \cite{Ruzsa2010} that if one is interested in bounding $|X+B_1+\dots+B_h|$ for a suitably chosen large subset of $A$, then the correct exponent of $m$ is 1. 

Specifically he proved that for any $\varepsilon>0$ there exists a non-empty subset $\emptyset\neq X \subseteq A$ such that $|X|>(1-\varepsilon) |A|$ and\[
|X+B_1+\dots+B_h| \leq \left(\frac{h   \varepsilon^{1-h}-1}{h-1}\right) \alpha_1\dots\alpha_h  |X| \leq 2 \varepsilon^{1-h}  \alpha_1\dots\alpha_h  |X|.
\]

The exponent of $|X|$ in the upper bound remains 1 even when $X$ is required to have very large density in $A$. The nature of the upper bound changes when the cardinality of the whole of $A+B_1+\dots+B_h$ is bounded.

\section[Proof of Theorem~1.3]{Proof of Theorem \ref{Main}}
\label{Proof of main}

The upper bound in Theorem \ref{Main} is an increasing function of the $\alpha_i$ and the ratios $|A+B_i| /|A|$ are rational numbers so we may assume that $ \alpha_i \in \mathbb{Q}^+$. 

The next step is to reduce to the special case where all the $\alpha_i$ are equal. We prove the following.
\begin{proposition}\label{Main equal}
Let $h$ be a positive integer, $\alpha$ be a positive rational number and $m$ an arbitrarily large integer. Suppose that $A, B_1,\dots,B_h$ are finite sets in a commutative group that satisfy $|A|=m$ and $|A+B_i| \leq \alpha m$ for all $1\leq i \leq h$. Then
\begin{align*}
|A+B_1+\dots+B_h|  & \leq \alpha^h m + \frac{ \left(1 - \frac{1}{h}\right)^{h-1} }{h}  \alpha^h \left( m^{2-1/h} + O\left( m^{2-2/h} \right) \right) \\
				& = \frac{ \left(1 - \frac{1}{h}\right)^{h-1} }{h}  \alpha^h \left( m^{2-1/h} + O\left( m^{2-2/h} \right) \right) .
\end{align*}
\end{proposition}

Theorem \ref{Main} is deduced from the above proposition in a standard way by working in direct products of groups (see for example \cite{Ruzsa2009,Ruzsa2010}). 

\begin{proof}[Deduction of Theorem \ref{Main} from Proposition \ref{Main equal}]
Let $\alpha_i = p_i /q_i$ and set $n = q_1 \dots q_h.$ Furthermore, let $T_1,\ldots, T_h$ be pairwise disjoint sets of generators of a free abelian group $F$ with cardinality $n_i:=|T_i| = n \prod_{j\neq i} \alpha_j $; and let $0$ denote the identity of $F$. Each $n_i$  is chosen so that $\alpha_i  n_i$  is equal to $ n\left (\prod_j \alpha_j\right)$. 

We apply Proposition \ref{Main equal} to the sets $A^\prime = A \times \{0\}$, $B_1^\prime = B_1 \times T_1,\dots , B_h^\prime = B_h \times T_h$. As 
\begin{align*}
 |A^\prime+ B_i^\prime | = |T_i|   |A+B_i| \leq m  n \prod_{j=1}^h \alpha_j  = \left( n \prod_{j=1}^h \alpha_j \right) |A^\prime| 
\end{align*}
for all $i=1,\dots,h$ the proposition yields 
\begin{align*}
|A^\prime+B_1^\prime+\dots+B_h^\prime| & \leq \frac{\left( 1- \frac{1}{h} \right)^{h-1}}{h} \left( n \prod_{j=1}^h \alpha_j \right)^h \left(m^{2-1/h} + O(m^{2-2/h}) \right)\\
								 &   =  \frac{\left( 1- \frac{1}{h} \right)^{h-1}}{h} \left( \prod_{i=1}^h \alpha_i   n_i\right)   \left( m^{2-1/h} + O(m^{2-2/h})\right).
\end{align*}

Theorem \ref{Main} follows by observing that
\begin{align*}
|A^\prime+B_1^\prime+\dots+B_h^\prime| & =  |A + B_1+\dots+B_h|  |\{0\} + T_1 + \dots +T_h| \\
								  & =  |A + B_1+\dots+B_h|   \prod_{i=1}^h n_i \,,
\end{align*}
and dividing by $n_1\cdots n_h$.
\end{proof}

We next prove Proposition \ref{Main equal}. The rough strategy is as follows. We initially apply Theorem \ref{Ruzsa} to find a non-empty subset $\emptyset \neq X_1 \subseteq A$ whose growth under addition with the $B_i$ can be bounded. We are left with bounding 
\[
(A+B_1+\dots+B_h) \setminus (X_1+B_1+\dots+B_h).
\]
 
 We would like to iterate this process, which requires a stronger statement than Theorem \ref{Ruzsa}. From a technical point of view, this is the heart of the argument. It requires a detour in graph-theoretic techniques developed by Pl\"unnecke and Ruzsa and so is left for Section \ref{Graphs}. The key result we employ in the proof of Proposition \ref{Main equal} is as follows. It will be proved in a slightly stronger form as Corollary \ref{Restricted sumsets} in p.\pageref{Restricted sumsets}.

\begin{lemma}[Bound for sumsets with a component removed]\label{bound}
Let $h$ be a positive integer. Suppose that $A, B_1,\dots,B_h$ are finite sets in a commutative group and $E\subseteq A$ a subset of $A$.

If $\emptyset\neq X \subseteq A\setminus E$ is a subset of $A\setminus E$ that minimises the quantity
\[
\mu(Z) := \frac{1}{h} \sum_{i=1}^h \frac{ |(Z+B_i) \setminus (E+B_i)|}{|Z|}
\]
over all non-empty subsets $\emptyset \neq Z \subseteq A\setminus E$, then
\[
|(X+B_1+\dots+ B_h)   \setminus (E+B_1+\cdots + B_h)| \leq \mu^h   |X|,
\]
where
\[
\mu = \mu(X) = \frac{1}{h}  \sum_{i=1}^h \frac{ |(X+B_i)\setminus (E+B_i)|}{|X|} .
\]
\end{lemma}
Note that setting $E=\emptyset$ gives Theorem \ref{Ruzsa} for the special case when $\alpha_1=\dots=\alpha_h=\alpha$, as
\[
\mu = \min_{\emptyset\neq Z \subseteq A}  \frac{1}{h}  \sum_{i=1}^h \frac{ |Z+B_i|}{|Z|} \leq \frac{1}{h}  \sum_{i=1}^h \frac{ |A+B_i|}{|A|} \leq \alpha .
\]

The ultimate task for this section is to deduce Proposition \ref{Main equal} from the above estimate.
\begin{proof}[Proof of Proposition \ref{Main equal}]
Applying the bound stated above successively we partition $A$ into $X_1 \cup \dots \cup X_k$ for some finite $k$ ($A$ is finite), whose exact value is irrelevant to the argument. More precisely in the $j$th step set we set $E=\bigcup_{\ell=1}^{j-1} X_\ell$ ($E=\emptyset$ for $j=1$) and chose $X_j$ to be the \emph{minimal} non-empty subset of $A\setminus E$ that minimises the quantity
\[
\mu_j:=\frac{1}{h} \sum_{i=1}^h \frac{ |(Z+B_i) \setminus (E+B_i)|}{|Z|},
\]
which we set to be $\mu_j$. The inequality we get is
\begin{equation}\label{mu_j bound} 
\left| \left( X_j + B_1 + \dots + B_h \right) \setminus \left(\bigcup_{\ell = 1}^{j-1} X_\ell + B_1 +  \dots + B_h \right) \right| \leq \mu_j^h |X_j|. 
\end{equation}

It is crucial to observe that the defining properties (and especially the minimality) of the $X_j$ imply that the $\mu_j$ form an increasing sequence. Indeed $\mu_j \leq \mu_{j+1}$ as
\begin{align*}
\mu_j |X_j | + \mu_j | X_{j+1}| &   =     \mu_j |X_j \cup X_{j+1}| \\ 
				  	   & \leq   \frac{1}{h} \sum_{i=1}^h \left| \left((X_j \cup X_{j+1}) + B_i\right) \setminus \left(\bigcup_{\ell = 1}^{j-1} X_\ell + B_i \right) \right| \\ 
	   			  	   &   =     \frac{1}{h} \sum_{i=1}^h \left| (X_j + B_i) \setminus \left(\bigcup_{\ell = 1}^{j-1} X_\ell + B_i \right) \right| \\
	  			  	  &         ~~~~~ +~  \frac{1}{h} \sum_{i=1}^h \left| (X_{j+1} + B_i) \setminus \left(\bigcup_{\ell = 1}^{j} X_\ell + B_i \right) \right| \\
	  			  	   &  =    \mu_j |X_j| + \mu_{j+1} |X_{j+1}|.
\end{align*}

When the $\mu_j$ are large it turns out that replacing the estimate in \eqref{mu_j bound} by a more elementary one is more economical. We have 
\begin{align*}
\left| \left( X_j + B_1 + \dots + B_h \right) \setminus \left(\bigcup_{\ell = 1}^{j-1} X_\ell + B_1 +  \dots + B_h \right) \right|   & \leq   |X_j + B_1+\dots + B_h| \\ &\leq  |X_j|   |B_1+\dots+B_h| .
\end{align*}

To bound $|B_1+\dots+B_h|$ we adapt accordingly the argument in Theorem~\ref{Ruzsa}. 
\[ 
|B_1+\dots+B_h| \leq |X_1+B_1+\dots+B_h| \leq\mu_1^h |X| \leq \mu_1^h   m .
\]
Combining \eqref{mu_j bound} with the last two inequalities gives 
\[ 
\left| \left( X_j + B_1 + \dots + B_h \right) \setminus \left(\bigcup_{\ell = 1}^{j-1} X_\ell + B_1 +  \dots + B_h \right) \right| \leq   \min\left\{ \mu_j^h , \mu_1^h   m \right\} |X_j|. 
\]
Summing over $j=1,\dots,k$ gives
\begin{align*} 
|A+B_1+\dots+B_h|  &   =    \sum_{j=1}^k \left| \left( X_j + B_1 + \dots + B_h \right) \setminus \left(\bigcup_{\ell = 1}^{j-1} X_\ell + B_1 +  \dots + B_h \right) \right| \\ 
				& \leq  \sum_{j=1}^k  \min\left\{ \mu_j^h , \mu_1^h   m \right\} |X_j|.
\end{align*}

We are left with bounding the  sum
$
 \sum_{j=1}^k  \min \left\{ \mu_j^h  ,  \mu_1^h   m \right\}   |X_j|
$
subject to two constraints:
\[
\sum_{j=1}^k |X_j| = m 
\]
and
\begin{align*}
\sum_{j=1}^k \mu_j    |X_j|  &  =    \sum_{j=1}^k \left( \frac{1}{h} \sum_{i=1}^h \left| (X_j + B_i) \setminus \left( \bigcup_{\ell = 1}^{j-1} X_\ell + B_i \right)  \right|  \right) \\
					   &   =    \frac{1}{h} \sum_{i=1}^h \left( \sum_{j=1}^k \left| (X_j + B_i) \setminus \left( \bigcup_{\ell = 1}^{j-1} X_\ell + B_i \right)  \right|  \right)\\
				            &   =    \frac{1}{h} \sum_{i=1}^h |A+B_i| \\
				            & \leq  \alpha m. 
\end{align*}
The two quantities inside the $\min$ are equal if 
\[
\mu_j = \mu_* := \mu_1   m^{1/h}.
\]
As $\mu_j \geq \mu_1$ for all $1\leq j \leq h$, we can replace the $\min$ by the straight line
\[
\mu_1^h + (\mu_j - \mu_1) \frac{\mu_*^h - \mu_1^h}{\mu_*-\mu_1},
\]
which, thought of as function of $\mu_j$, intersects the curve $\mu_j^h$ at $\mu_j = \mu_1$ and $\mu_j = \mu_*.$ The slope is bounded by
\[
 \mu_1^{h-1} \frac{m - 1}{m^{1/h}-1} \leq \mu_1^{h-1} \left( m^{1-1/h} + 2 m^{1-2/h} \right).
\]

Therefore
\begin{align*} 
|A+B_1+\dots+B_h|  & \leq  \sum_{j=1}^k \left(\mu_1^h +  \mu_1^{h-1} (\mu_j - \mu_1) \left( m^{1-1/h} + 2 m^{1-2/h} \right)\right)   |X_j| \\ 
				&  =     \mu_1^h \sum_{j=1}^k |X_j|  +  \mu_1^{h-1} \left( m^{1-1/h} + 2 m^{1-2/h} \right)\sum_{j=1}^k \mu_j |X_j| \\
				& ~~~~~ -  ~\mu_1^{h} \left( m^{1-1/h} + 2 m^{1-2/h} \right)\sum_{j=1}^k |X_j| \\
				& \leq   \mu_1^h  m + (\alpha - \mu_1)  \mu_1^{h-1}  \left( m^{2-1/h} + 2 m^{2-2/h} \right) \\
				& \leq   \alpha^h  m + (\alpha - \mu_1)  \mu_1^{h-1}   \left( m^{2-1/h} + 2 m^{2-2/h} \right).
\end{align*}

The final task is to select the value of $1\leq \mu_1 \leq \alpha$ that maximises this expression. Differentiating the expression $(\alpha-\mu_1)\mu_1^{h-1}$ with respect to $\mu_1$ gives that it is maximised when
\[ 
(h-1)   (\alpha - \mu_1) = \mu_1 \implies \mu_1 = \left(1 - \frac{1}{h}\right) \alpha \mbox{ or } \alpha-\mu_1 = \frac{\alpha}{h}.
\]
Substituting above gives 
\begin{align*}
|A+B_1+\dots+B_h|  & \leq \alpha^h  m + \frac{ \left(1 - \frac{1}{h}\right)^{h-1} }{h}  \alpha^h  \left( m^{2-1/h} + 2 m^{2-2/h} \right) \\ 
				& = \alpha^h m + \frac{ \left(1 - \frac{1}{h}\right)^{h-1} }{h} \alpha^h  \left( m^{2-1/h} + O\left(m^{2-2/h} \right) \right).
\end{align*}
\end{proof}

This completes the proof of Proposition \ref{Main equal} modulo the proof of Lemma~\ref{bound} on p.~\pageref{bound}, which as we have seen implies Theorem \ref{Main}. The proof of the estimate is given in Section \ref{Graphs}. We next provide examples which show that the upper bound given by Theorem \ref{Main} is asymptotically sharp.

\section{Examples}
\label{Examples}

We construct the sets in Example \ref{Example}. To keep the notation simple we assume that the $\alpha_i$ are all equal: $\alpha_1 = \dots = \alpha_h =  \alpha.$ 

Once these examples have been constructed, it is straightforward to construct examples for different $(\alpha_1,\dots,\alpha_h)$ by considering direct products. Very much like in the first step of the proof of Theorem \ref{Main} in Section \ref{Proof of main} one then considers sets $A^{\prime} = A \times \{0\}, B_1^{\prime} = B_1 \times T_1,\dots, B_h^{\prime}= B_h \times T_h$ to get a different $h$-tuple $(\alpha_1,\dots,\alpha_h)$, where $\alpha_i = \alpha   |T_i|$. The $T_i$ are sets of distinct generators of a free commutative group. The details are as follows,
\[
\frac{|A^{\prime}+B_i^{\prime}|}{|A^{\prime}|} = \frac{|(A+B_i) \times T_i|}{|A|} = \frac{|A+B_i|}{|A|}   |T_i| \leq (1+o(1))  \alpha |T_i|
\]
and 
\begin{align*}
|A^{\prime}+B_1^{\prime}+\dots+B_h^{\prime}|  &   =       |(A+B_1+\dots+B_h) \times (T_1+\dots + T_h)|\\ 
				  &   =       |A+B_1+\dots+B_h| |T_1+\dots + T_h|\\ 
				  & \geq  (1+o(1))  \alpha^h |A|^{2-1/h}  |T_1|\cdots |T_h|\\
				  &   =     (1+o(1))  \alpha_1\cdots\alpha_h |A^{\prime}|^{2-1/h}.
\end{align*}

To construct the sets for the special case when $\alpha_i=\alpha$ for all $i$, we fix $h$ and let $a$ and $l$ be integers, which we consider as variables with $a$ assumed to be large and divisible by $h-1$. We set $b=l a$ and work in $\mathbb{Z}_b^k$, where $k= h + a^{h-1}/(h-1)$. We write $x_i$ for the $i$th coordinate of the vector $x$.

We consider $A = A_1 \cup A_2$ where 
\[ 
A_1= \{x : x_i\in\{0,l,2l,\dots,(a-1)l\}~\mbox{for}~1\leq i\leq h~\mbox{and}~x_i=0~\mbox{otherwise}\}
\]
and $A_2$ is a collection of $a^{h-1}/(h-1)$ independent points 
\[ 
A_2= \bigcup_{j=h+1}^k\{x : x_j = 1  ,\; x_j=0 \mbox{ otherwise}\}.
\]
$B_i$ is taken to be a copy of $\mathbb{Z}_b$ 
\[ 
B_i = \{x : x_i\in \{0,\dots,b-1\} ,\; x_j=0\mbox{ for all } j\neq i \}.
\]
We now estimate the cardinality of the sets that interest us.
\[ 
|A|= |A_1| + |A_2| = a^h + \frac{a^{h-1}}{h-1} =(1+o(1)) a^h.
\]
As $h$ is fixed different values of $a$ result to different values of $m$. 

To bound $|A+B_i|$ we note that $|A_1 + B_i|$ equals
\[
| \{x : x_i\in \{0,\dots,b-1\} , x_j \in \{0,\ell, 2\ell, \dots , (a-1) \ell\}, j\neq i  \}|  = b a^{h-1}
\]
and that $\displaystyle |A_2+B_i| =  |A_2| |B_i|= \frac{b a^{h-1}}{h-1}$. Thus
\begin{align*}
|A+B_i|  &\leq  |A_1+B_i|+|A_2+B_i|\\
       	     &\leq  b a^{h-1}+ \frac{b a^{h-1}}{h-1}\\
       	     & =   \left(1+\frac{1}{h-1} \right) l a^{h}\\
               & =    (1+o(1)) \left(1+\frac{1}{h-1}\right)  l  m.
\end{align*}
Therefore $\displaystyle  \alpha = \left(1+\frac{1}{h-1}\right) l$.

$h$ is fixed and so different values of $l$ result to different values of $\alpha$.

To bound $|A+B_1+\dots+B_h|$ from below observe that $|B_1 + \dots + B_h| = |\mathbb{Z}_b^h|=b^h$ and that for $a,a^\prime\in A_2$ the intersection $(a+B_1+\dots+B_h)\cap(a^\prime+B_1+\dots+B_h)$ is emty. Thus
\begin{align*}
|A+B_1+\dots+B_h|  &\geq  |A_2+B_1+\dots+B_h|\\
       				& =     |A_2|   |B_1+\dots+B_h|\\
			         & =    \frac{a^{h-1}}{h-1}   b^h \\
       				& =    \frac{ l^h }{h-1}   a^{2h-1}\\
			        & =     (1+o(1)) \frac{\left( 1 - \frac{1}{h}\right)^{h}}{h-1}  \alpha^h   m^{2-1/h}\\
			         & =     (1+o(1)) \frac{\left( 1 - \frac{1}{h}\right)^{h-1}}{h}  \alpha^h   m^{2-1/h}.
\end{align*}
We are done. As is expected the structure of the sets presented here is such that every inequality in Section \ref {Proof of main} is more or less attained.

\section{Graph Theory}
\label{Graphs}
In this section we develop the graph theoretic framework necessary for our proof of the estimate on p.\pageref{bound}; the last step of the proof of Theorem \ref{Main}. The results and methods of this section are influenced by the work of Ruzsa, c.f. \cite{Ruzsa2009,Ruzsa2010}. 

We define a type of layered commutative graph, called a \emph{commutative hypercube graph}, that generalises the addition graph associated to sumsets of the form $A+B_1+\cdots+B_h$, defined in the first example below. The class of commutative hypercube graphs includes graphs that result from removing a component from an addition graph. The main result of this section is an analog of Theorem \ref{Ruzsa} for commutative hypercube graphs.

Throughout this section $\biguplus$ stands for disjoint union.

\subsection{Hypercube graphs and their products}
\label{Hypercube}

Let $Q_h$ denote the set of all subsets of $\{1,\ldots,h\}$ and for $\ensuremath{I}$ in $Q_h$, let $|\ensuremath{I}|$ denote the cardinality of $\ensuremath{I}$.
Given $\ensuremath{I}$ and $\ensuremath{I}^{\prime}$ in $Q_h$, we will write $\ensuremath{I}\to\ensuremath{I}^{\prime}$ if $\ensuremath{I}^{\prime}=\ensuremath{I}\cup\{i\}$ for some $i\not\in\ensuremath{I}$.
\begin{definition}[Hypercube Graph]
\label{hypercube}
  Let $\ensuremath{\mathcal{G}}$ be a directed graph with vertex set $V$ and edge set $E$.
We say that $\ensuremath{\mathcal{G}}$ is a \emph{hypercube graph indexed by $Q_h$}\/ if it satisfies two conditions:
\begin{itemize}
\item[(i)] For each $\ensuremath{I}$ in $Q_h$ there exists a set $U_{\ensuremath{I}}\subseteq V$ such that $V$ is the disjoint union of the $U_{\ensuremath{I}}$s: $V=\biguplus_{\ensuremath{I}\in Q_h}U_{\ensuremath{I}}$. \label{hypercube vertices}
\item[(ii)] There is an edge $u\to v$ in $E$ only if $u\in U_{\ensuremath{I}}$ and $v\in U_{\ensuremath{I}^{\prime}}$ where $\ensuremath{I}\to\ensuremath{I}^{\prime}$. \label{hypercube edges}
\end{itemize}
\end{definition}
For short, we may say $\ensuremath{\mathcal{G}}$ is a ``$Q_h$-hypercube graph.''
Note that a $Q_h$-hypercube graph is a layered graph with $h+1$ layers: $V=V_0\cup\cdots\cup V_{h}$, where $V_i=\biguplus_{|\ensuremath{I}|=i}U_{\ensuremath{I}}$.

We give some examples of hypercube graphs.
The most important example of a hypercube graph is an addition graph with different summands, which are featured in \cite{Ruzsa2010}.
\begin{indexedexample}[Addition graphs]
  Let $A,B_1,\ldots,B_h$ be finite subsets of a commutative group $\ensuremath{G}$.
Their addition graph $\ensuremath{\mathcal{G}}_+(A,B_1,\ldots,B_h)$ is defined as follows: for each $\ensuremath{I}$ in $Q_h$, let $U_{\ensuremath{I}}=A+\sum_{i\in\ensuremath{I}}B_i$.
We consider each $U_{\ensuremath{I}}$ to be contained in a separate copy of $\ensuremath{G}$, and we let $V=\biguplus_{\ensuremath{I}\in Q_h}U_{\ensuremath{I}}$.
For each vertex $x$ in $U_{\ensuremath{I}}$ there is an edge to $y$ in $U_{\ensuremath{I}^{\prime}}$ if $\ensuremath{I}^{\prime}=\ensuremath{I}\cup\{i\}$ and $y=x+b$ for some $b$ in $B_i$.
Thus $\ensuremath{\mathcal{G}}_+(A,B_1,\ldots,B_h)$ is a hypercube graph indexed by $Q_h$.
\end{indexedexample}
Note that any subgraph of a $Q_h$-hypercube graph is automatically a $Q_h$-hypercube graph.
For certain induced subgraphs of a hypercube graph, we can say something more.
We recall from \cite{Ruzsa2009} a definition.
\begin{definition} [Channels of directed graphs]
Given a directed graph $\ensuremath{\mathcal{G}}=\ensuremath{\mathcal{G}}(V,E)$ and two sets of vertices $X,Y\subseteq V$, the \emph{channel $\ensuremath{\overline{\ensuremath{\mathcal{G}}}}(X,Y)$}\/ between $X$ and $Y$ is the subgraph of $\ensuremath{\mathcal{G}}$ induced by the set of vertices that lie on a path from $X$ to $Y$ (including endpoints).
\end{definition}
\begin{indexedexample}[Channels are hypercube graphs]\label{ChHyp}
Let $\ensuremath{\mathcal{G}}$ be a hypercube graph indexed by $Q_h$ and let $I$ and $\ensuremath{I}^{\prime}$ be elements of $Q_h$ such that $\ensuremath{I} \subseteq \ensuremath{I}^{\prime}$.
Given subsets $X\subseteq U_{\ensuremath{I}}$ and $Y\subseteq U_{\ensuremath{I}^{\prime}}$, the channel $\ensuremath{\overline{\ensuremath{\mathcal{G}}}}(X,Y)$ is a hypercube graph indexed by $Q_j$, where $j=|\ensuremath{I}^\prime\setminus\ensuremath{I}|$.
\end{indexedexample}

\begin{proof}
Since the edges of $\ensuremath{\overline{\ensuremath{\mathcal{G}}}}$ are edges of $\ensuremath{\mathcal{G}}$, condition \ref{hypercube edges} of Definition \ref{hypercube} is automatically satisfied.
Thus it remains to be shown that condition \ref{hypercube vertices} is satisfied.

Note that the set of $\ensuremath{J}$ in $Q_h$ such that $\ensuremath{I}\subseteq\ensuremath{J}\subseteq\ensuremath{I}^\prime$ is in one-to-one correspondence with $Q_j$.
Fixing one such correspondence, let $\bar{\ensuremath{J}}$ denote the element in $Q_j$ corresponding to $\ensuremath{J}$ and set $U_{\bar{\ensuremath{J}}}(\ensuremath{\overline{\ensuremath{\mathcal{G}}}})=V(\ensuremath{\overline{\ensuremath{\mathcal{G}}}})\cap U_{\ensuremath{J}}$.
Since any vertex in $V(\ensuremath{\overline{\ensuremath{\mathcal{G}}}})$ must be an element of some $U_{\ensuremath{J}}$ with $\ensuremath{I}\subseteq\ensuremath{J}\subseteq\ensuremath{I}^\prime$, we have $V(\ensuremath{\overline{\ensuremath{\mathcal{G}}}})=\biguplus_{\bar{\ensuremath{J}}\in Q_j}U_{\bar{\ensuremath{J}}}(\ensuremath{\overline{\ensuremath{\mathcal{G}}}})$, as desired.
\end{proof}

To prove the analog of Theorem \ref{Ruzsa}, we must define a type of graph product between hypercube graphs that is motivated by addition graphs of product sets.
\begin{definition}[Hypercube Product]
  Let $\ensuremath{\mathcal{G}}^\prime$ and $\ensuremath{\mathcal{G}}^{\prime\prime}$ be hypercube graphs indexed by $Q_h$.
We define a hypercube graph $\ensuremath{\mathcal{G}}=\ensuremath{\mathcal{G}}^\prime\otimes\ensuremath{\mathcal{G}}^{\prime\prime}$ also indexed by $Q_h$ as follows: for each $\ensuremath{I}\in Q_h$, we define $U_{\ensuremath{I}}(\ensuremath{\mathcal{G}})=U_{\ensuremath{I}}(\ensuremath{\mathcal{G}}^\prime)\times U_{\ensuremath{I}}(\ensuremath{\mathcal{G}}^{\prime\prime})$, and for $(u,v)\in U_{\ensuremath{I}}(\ensuremath{\mathcal{G}}),(u^\prime,v^\prime)\in U_{\ensuremath{I}^\prime}(\ensuremath{\mathcal{G}})$, we have $(u,v)\to(u^\prime,v^\prime)$ if and only if $u\to u^\prime$ and $v\to v^\prime$.
We call $\ensuremath{\mathcal{G}}$ the \emph{hypercube product}\/ of $\ensuremath{\mathcal{G}}^\prime$ and $\ensuremath{\mathcal{G}}^{\prime\prime}$.
\end{definition}
It is easy to see that $\ensuremath{\mathcal{G}}_+(A^\prime\times A^{\prime\prime},B_1^\prime\times B_1^{\prime\prime},\ldots,B_h^\prime\times B_h^{\prime\prime})$ is the hypercube product of $\ensuremath{\mathcal{G}}_+(A^\prime,B_1^\prime,\ldots,B_h^\prime)$ and $\ensuremath{\mathcal{G}}_+(A^{\prime\prime},B_1^{\prime\prime},\ldots,B_h^{\prime\prime})$. In this sense, direct products in the group setting correspond to hypercube products in the graph setting and so hypercube products are natural objects.

\subsection{Square commutativity}
\label{Square commutativity}

The key feature of addition graphs that makes them useful in additive number theory is that they capture in a graph theoretic way the commutativity of addition. This particular feature was first exploited by Pl\"unnecke in \cite{Plunnecke1970}, who worked with a class of directed layered graphs he called \emph{commutative}. The importance of commutative graphs to additive number theory is detailed in \cite{Nathanson1996,Tao-Vu2006,Ruzsa2009}. We will only mention them briefly as we need a stronger form of commutativity in order to prove Theorem \ref{Ruzsa}, one that works better for hypercube graphs. 

First we make an auxiliary definition: given index sets $\ensuremath{I},\ensuremath{I}^\prime,$ and $\ensuremath{I}^{\prime \prime}$ in $Q_h$ such that $\ensuremath{I}\to\ensuremath{I}^{\prime}\to\ensuremath{I}^{\prime \prime}$, there is a unique index set $\ensuremath{I}^{\prime}_c$ in $Q_h$ such that $\ensuremath{I}^{\prime}_c\not=\ensuremath{I}^{\prime}$ and $\ensuremath{I}\to\ensuremath{I}^{\prime}_c\to\ensuremath{I}^{\prime \prime}$; explicitly $\ensuremath{I}^{\prime}_c=\ensuremath{I}\cup(\ensuremath{I}^{\prime \prime}\setminus\ensuremath{I}^{\prime})$.
We will call $\ensuremath{I}^{\prime}_c$ the \emph{associate}\/ of $\ensuremath{I}^{\prime}$.
\begin{definition}[Square Commutativity]
  Let \ensuremath{\mathcal{G}}\ be a hypercube graph indexed by $Q_h$.
We say that \ensuremath{\mathcal{G}}\ is \emph{square commutative}\ if it satisfies two conditions:
\begin{enumerate}
\item Upward square commutativity: Given indices $\ensuremath{I},\ensuremath{I}^{\prime}$ and $\ensuremath{I}^{\prime \prime}$ in $Q_h$ such that $\ensuremath{I}\to\ensuremath{I}^{\prime}\to\ensuremath{I}^{\prime \prime}$, and vertices $v\in U_{\ensuremath{I}}$, $v^{\prime}\in U_{\ensuremath{I}^{\prime}}$, and $v_1^{\prime \prime},\ldots, v_n^{\prime \prime}\in U_{\ensuremath{I}^{\prime \prime}}$ such that $v\to v^{\prime}\to v_i^{\prime \prime}$ for $i=1,\ldots,n$, there exist distinct vertices $v_1^{\prime},\ldots,v_n^{\prime}\in U_{\ensuremath{I}^{\prime}_c}$ such that $v\to v_i^{\prime}\to v_i^{\prime \prime}$ for $i=1,\ldots,n$.
\item Downward square commutativity: Given indices $\ensuremath{I},\ensuremath{I}^{\prime}$ and $\ensuremath{I}^{\prime \prime}$ in $Q_h$ such that $\ensuremath{I}\to\ensuremath{I}^{\prime}\to\ensuremath{I}^{\prime \prime}$, and vertices $v_1,\ldots,v_n\in U_{\ensuremath{I}}$, $v^{\prime}\in U_{\ensuremath{I}^{\prime}}$, and $v^{\prime \prime}\in U_{\ensuremath{I}^{\prime \prime}}$ such that $v_i\to v^{\prime}\to v^{\prime \prime}$ for $i=1,\ldots,n$, there exist distinct vertices $v_1^{\prime},\ldots,v_n^{\prime}\in U_{\ensuremath{I}^{\prime}_c}$ such that $v_i\to v_i^{\prime}\to v^{\prime \prime}$ for $i=1,\ldots,n$.
\end{enumerate}
\end{definition}
Square commutative graphs are commutative in the sense defined by Pl\"unnecke; square commutativity strengthens commutativity by requiring that the alternate paths from $v$ to $v^{\prime \prime}_i$ (or from $v_i$ to $v^{\prime \prime}$) go through the associate vertex set.
This is an important observation as later on we will need to apply Pl\"unnecke's result.

In our language, Ruzsa has already shown in pp.~597-598 of \cite{Ruzsa2010} that addition graphs are square commutative:
\begin{proposition}[Ruzsa, \cite{Ruzsa2010}]
  Let $A,B_1,\ldots,B_h$ be subsets of a commutative group.
Then their addition graph $\ensuremath{\mathcal{G}}_+(A,B_1,\ldots,B_h)$ is square commutative.
\end{proposition}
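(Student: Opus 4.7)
The plan is to unpack the definitions and observe that the proposition is essentially a translation of the commutativity of the underlying group into the graph-theoretic language. Fix $\I \to \I' \to \I''$ in $Q_h$. There is a unique $i$ with $\I' = \I \cup \{i\}$ and a unique $j \neq i$ with $\I'' = \I' \cup \{j\}$, so the complementary index is $\I'_c = \I \cup \{j\}$. By the definition of an addition graph, an edge from a vertex of $U_\J$ to a vertex of $U_\J \cup \{k\}$ corresponds to adding some element of $B_k$. The whole point will be that in going $v \to v' \to v''$ one first adds an element of $B_i$ and then an element of $B_j$, while the path through $\I'_c$ reverses this order.

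For upward square commutativity, I would start from the hypotheses $v \to v' \to v_k''$, write $v' = v + b$ with $b \in B_i$, and $v_k'' = v' + b_k = v + b + b_k$ with $b_k \in B_j$. By commutativity, the vertex $v_k' := v + b_k$, viewed as an element of $U_{\I'_c} = A + \sum_{\ell \in \I} B_\ell + B_j$, satisfies $v \to v_k'$ (via $b_k \in B_j$) and $v_k' \to v_k''$ (via $b \in B_i$). Distinctness of the $v_k'$ is immediate: since the $v_k''$ are distinct and $v_k'' = v + b + b_k$, the elements $b_k$ are distinct, hence so are the $v_k' = v + b_k$.

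Downward square commutativity will be handled symmetrically. Given $v_k \to v' \to v''$ with distinct $v_k \in U_\I$, write $v' = v_k + b_k$ with $b_k \in B_i$ (note that here the $b_k$ depend on $k$, since they equal $v' - v_k$, and are distinct because the $v_k$ are) and $v'' = v' + b$ with $b \in B_j$. Set $v_k' := v_k + b$; then $v_k' \in U_{\I'_c}$, we have $v_k \to v_k'$ via $b \in B_j$, and $v_k' + b_k = v_k + b + b_k = v''$ so $v_k' \to v''$ via $b_k \in B_i$. Distinctness of $v_k'$ follows from distinctness of $v_k$ by the injectivity of translation.

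I do not anticipate any real obstacle: the proof is pure bookkeeping. The only thing to be careful about is that each $U_\I$ is a separate copy of the ambient group, so one must check that the candidate intermediate vertex $v_k'$ really lies in $U_{\I'_c}$ (and not merely in $U_{\I'}$); this is confirmed by writing $v_k'$ explicitly in the form $a + \sum_{\ell \in \I} b_\ell + b'$ with $b' \in B_j$.
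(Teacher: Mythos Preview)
Your argument is correct and is exactly the natural verification one would expect: unwind the edges as group translations and use commutativity to reroute $b\in B_i$ and $b_k\in B_j$ (respectively $b_k\in B_i$ and $b\in B_j$) through the complementary index set, with distinctness following from injectivity of translation. Note, however, that the paper does not supply its own proof of this proposition; it simply attributes the result to Ruzsa and cites \cite{Ruzsa2010}, so there is nothing in the paper to compare your approach against beyond observing that your direct check is the standard one.
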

Channels of square commutative hypercube graphs are also square commutative.  
\begin{lemma}\label{ChSqCom}
Let $\ensuremath{\mathcal{G}}$ be a square commutative hypercube graph indexed by $Q_h$, and let $\ensuremath{\overline{\ensuremath{\mathcal{G}}}}=\ensuremath{\overline{\ensuremath{\mathcal{G}}}}(X,Y)$ be a channel of $\ensuremath{\mathcal{G}}$.
Then $\ensuremath{\overline{\ensuremath{\mathcal{G}}}}$ is square commutative.
Additionally, if $X \subseteq U_{\ensuremath{I}}$ and $Y \subseteq U_{\ensuremath{I}^{\prime}}$ where $\ensuremath{I} \subsetneq \ensuremath{I}^{\prime}\in Q_h$, then $\ensuremath{\overline{\ensuremath{\mathcal{G}}}}$ is a $Q_j$ square commutative hypercube graph, where $j=|\ensuremath{I}^{\prime}\setminus\ensuremath{I}|$.
\end{lemma}
\begin{proof}
We have already shown on p.\pageref{ChHyp} that $\ensuremath{\overline{\ensuremath{\mathcal{G}}}}$ is a hypercube graph indexed by $Q_{|\ensuremath{I}^\prime \setminus \ensuremath{I}|}$. That it is square commutative follows from the fact that $\ensuremath{\mathcal{G}}$ is square commutative combined with the fact that if $x,z\in V(\ensuremath{\overline{\ensuremath{\mathcal{G}}}})$ and $x\to y\to z$ then $y\in V(\ensuremath{\overline{\ensuremath{\mathcal{G}}}})$.
\end{proof}
Now that we have shown that the main examples of hypercube graphs are square commutative, we will prove that square commutativity is inherited by products.
\begin{lemma}\label{SqComProd}
  Let $\ensuremath{\mathcal{G}}_1$ and $\ensuremath{\mathcal{G}}_2$ be square commutative hypercube graphs indexed by $Q_h$, and let $\ensuremath{\mathcal{G}}=\ensuremath{\mathcal{G}}_1\otimes\ensuremath{\mathcal{G}}_2$ be their hypercube product.
Then $\ensuremath{\mathcal{G}}$ is square commutative.
\end{lemma}
\begin{proof}
The proof is a straightforward verification of square commutativity.
We will prove only the upward condition, since the proof of the downward condition is similar.

Let $\ensuremath{I},\ensuremath{I}^{\prime},$ and $\ensuremath{I}^{\prime \prime}$ be indices in $Q_h$ such that $\ensuremath{I}\to\ensuremath{I}^{\prime}\to\ensuremath{I}^{\prime \prime}$, and suppose we have vertices $(u,v)\in U_{\ensuremath{I}}(\ensuremath{\mathcal{G}})$, $(u^{\prime},v^{\prime})\in U_{\ensuremath{I}^{\prime}}(\ensuremath{\mathcal{G}})$, and $(u^{\prime \prime}_1,v^{\prime \prime}_1),\ldots,(u^{\prime \prime}_n,v^{\prime \prime}_n)\in U_{\ensuremath{I}^{\prime \prime}}(\ensuremath{\mathcal{G}})$ such that $(u,v)\to (u^{\prime},v^{\prime})\to (u^{\prime \prime}_i,v^{\prime \prime}_i)$ for $i=1,\ldots,n$.
We must find $(u^{\prime}_1,v^{\prime}_1),\ldots,(u^{\prime}_n,v^{\prime}_n)\in U_{\ensuremath{I}^{\prime}_c}(\ensuremath{\mathcal{G}})$ such that $(u,v)\to (u^{\prime}_i,v^{\prime}_i)\to (u^{\prime \prime}_i,v^{\prime \prime}_i)$ for $i=1,\ldots,n$.

Consider the sequences of vertices $u\to u^{\prime}\to u^{\prime \prime}_i$ in $\ensuremath{\mathcal{G}}_1$.
Since $\ensuremath{\mathcal{G}}_1$ is square commutative, there exist distinct vertices $u^{\prime}_i\in U_{\ensuremath{I}^{\prime}_c}(\ensuremath{\mathcal{G}}_1)$ such that $u\to u^{\prime}_i\to u^{\prime \prime}_i$ for $i=1,\ldots,n$.
Similarly there exist distinct vertices $v^{\prime}_i\in U_{\ensuremath{I}^{\prime}_c}(\ensuremath{\mathcal{G}}_2)$ such that $v\to v^{\prime}_i\to v^{\prime \prime}_i$ for $i=1,\ldots,n$.
Thus we have distinct vertices $(u^{\prime}_i,v^{\prime}_i)\in U_{\ensuremath{I}^{\prime}_c}(\ensuremath{\mathcal{G}}_1)\times U_{\ensuremath{I}^{\prime}_c}(\ensuremath{\mathcal{G}}_2)=U_{\ensuremath{I}^{\prime}_c}(\ensuremath{\mathcal{G}})$ such that $(u,v)\to (u^{\prime}_i,v^{\prime}_i)\to (u^{\prime \prime}_i,v^{\prime \prime}_i)$ for $i=1,\ldots,n$, as desired.
\end{proof}

\subsection{A Pl\"unnecke-type inequality for square commutative graphs}
\label{Plunnecke-type}

The main goal in this section is to extend Ruzsa's Theorem \ref{Ruzsa}.
Our result can furthermore be thought of as an extension to square commutative graphs of Pl\"unnecke's inequality (Theorem \ref{Plunnecke} below). Before we do this we need to establish some notation and lemmas regarding magnification ratios.
 
Given a directed graph $\ensuremath{\mathcal{G}}$ and subsets $X,Y\subseteq V(\ensuremath{\mathcal{G}})$, we will use $\mathrm{Im}_\ensuremath{\mathcal{G}}(X,Y)$ to denote the set of elements in $Y$ that can be reached from $X$ by paths in $\ensuremath{\mathcal{G}}$.

If $\ensuremath{\mathcal{G}}$ has layers $V_0,\ldots, V_{h}$, we will use $\mu_i(\ensuremath{\mathcal{G}})$ to denote the \emph{$i$th magnification ratio} of $\ensuremath{\mathcal{G}}$, which is defined as
\[
\mu_i(\ensuremath{\mathcal{G}}):=\min_{\varnothing\not=Z\subseteq V_0}\frac{|\mathrm{Im}_\ensuremath{\mathcal{G}}(Z,V_i)|}{|Z|}.
\]
We will say that $\emptyset \neq X \subseteq V_0$ \emph{achieves} $\mu_i(\ensuremath{\mathcal{G}})$ when $\displaystyle \mu_i(\ensuremath{\mathcal{G}}) = \frac{|\mathrm{Im}_\ensuremath{\mathcal{G}}(X,V_i)|}{|X|}.$

Pl\"unnecke bounded the growth of magnification ratios of commutative graphs. We state a special case of his result that will be applied later.
\begin{theorem} [Pl\"unnecke, \cite{Plunnecke1970}]\label{Plunnecke}
Let $h\geq 1$ be a positive integer and $\ensuremath{\mathcal{G}}$ be a commutative graph. Then 
\[
\mu_h(\ensuremath{\mathcal{G}}) \leq \mu_1(\ensuremath{\mathcal{G}})^h.
\]
\end{theorem}
Square commutative graphs are commutative, so Theorem \ref{Plunnecke} applies; however, the bound on $\mu_h(\ensuremath{\mathcal{G}})$ is not adequate for our purpose. The goal of this subsection is to improve it for square commutative graphs.

If $\ensuremath{\mathcal{G}}$ is a hypercube graph indexed by $Q_h$, then the magnification of a subset $\emptyset\neq X\subseteq V_0$ in $U_{\ensuremath{I}}$, where $\ensuremath{I}\in Q_h$, is defined as
\[
\beta_{\ensuremath{I}}(X):=\frac{|\mathrm{Im}_\ensuremath{\mathcal{G}}(X,U_{\ensuremath{I}})|}{|X|}.
\]
If $\ensuremath{I}=\{i\}$, then we will use $\beta_i(X)$ to denote $\beta_{\{i\}}(X)$.
The following lemma relates the $\beta_{\ensuremath{I}}$ to the usual magnification ratio $\mu_i$.
\begin{lemma}
\label{beta ineq 1}
  Let $\ensuremath{\mathcal{G}}$ be a hypercube graph indexed by $Q_h$.
For any $\emptyset \neq X\subseteq V_0(\ensuremath{\mathcal{G}})$ we have
\[
\mu_i(\ensuremath{\mathcal{G}})\leq \sum_{|I|=i}\beta_{\ensuremath{I}}(X),
\]
with equality if and only if $X$ achieves $\mu_i(\ensuremath{\mathcal{G}})$ i.e., $\displaystyle \mu_i(\ensuremath{\mathcal{G}}) =   \frac{|\mathrm{Im}_\ensuremath{\mathcal{G}}(X,V_i)|}{|X|}$.
\end{lemma}
\begin{proof}
  By the definition of $\mu_i(\ensuremath{\mathcal{G}})$ we have
\[
\mu_i(\ensuremath{\mathcal{G}})\leq\frac{|\mathrm{Im}_\ensuremath{\mathcal{G}}(X,V_i)|}{|X|}
\]
with equality if and only if $X$ achieves $\mu_i(\ensuremath{\mathcal{G}})$.
Since $V_i$ is a disjoint union of the $U_{\ensuremath{I}}$ such that $|\ensuremath{I}|=i$, we have
\[
  \frac{|\mathrm{Im}_\ensuremath{\mathcal{G}}(X,V_i)|}{|X|} = \frac{|\biguplus_{|\ensuremath{I}|=i}\mathrm{Im}_\ensuremath{\mathcal{G}}(X,U_{\ensuremath{I}})|}{|X|}=\sum_{|\ensuremath{I}|=i}\frac{|\mathrm{Im}_\ensuremath{\mathcal{G}}(X,U_{\ensuremath{I}})|}{|X|}=\sum_{|\ensuremath{I}|=i}\beta_{\ensuremath{I}}(X).
\]
Combining these two equations yields the desired inequality.
\end{proof}
Later we will also need the following elementary identity, which asserts that the $\beta_i$ are multiplicative.
\begin{lemma}
\label{beta ineq 3}
  Let $\ensuremath{\mathcal{G}}^\prime, \ensuremath{\mathcal{G}}^{\prime\prime}$ be hypercube graphs indexed by $Q_h$ and $\ensuremath{\mathcal{G}} = \ensuremath{\mathcal{G}}^\prime \otimes \ensuremath{\mathcal{G}}^{\prime\prime}$. Then for all $i=1,\dots,h$ and $Z^\prime \subseteq V_0(\ensuremath{\mathcal{G}}^\prime)$, $Z^{\prime\prime} \subseteq V_0(\ensuremath{\mathcal{G}}^{\prime\prime})$ we have:
\[
\beta_i(Z^\prime \times Z^{\prime\prime}) = \beta_i(Z^\prime)  \beta_i(Z^{\prime\prime}).
\]
\end{lemma}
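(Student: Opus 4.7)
The plan is to unwind the definition of the hypercube product, check that the image in the product graph factors as a Cartesian product, and then divide by the sizes of $Z'$ and $Z''$.

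First I would observe the key structural fact: since $\{i\}$ sits at layer $1$, for any $Z \subseteq V_0(\graph)$ the set $\im_\graph(Z, U_{\{i\}}(\graph))$ equals the one-step neighbourhood of $Z$ in $U_{\{i\}}(\graph)$, because a path from layer $0$ to layer $1$ in a hypercube graph has length $1$. So the question reduces to analysing single edges between $V_0(\graph) = U_\es(\graph') \times U_\es(\graph'')$ and $U_{\{i\}}(\graph) = U_{\{i\}}(\graph') \times U_{\{i\}}(\graph'')$.

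Next I would show the set identity
\begin{equation*}
\im_\graph(Z' \times Z'', U_{\{i\}}(\graph)) \;=\; \im_{\graph'}(Z', U_{\{i\}}(\graph')) \,\times\, \im_{\graph''}(Z'', U_{\{i\}}(\graph'')).
\end{equation*}
For the forward inclusion, if $(w',w'')$ lies in the left-hand side, then there is $(z',z'') \in Z' \times Z''$ with $(z',z'') \to (w',w'')$ in $\graph$; by the definition of the hypercube product this forces $z' \to w'$ in $\graph'$ and $z'' \to w''$ in $\graph''$, placing $(w',w'')$ in the right-hand side. Conversely, given $w' \in \im_{\graph'}(Z', U_{\{i\}}(\graph'))$ and $w'' \in \im_{\graph''}(Z'', U_{\{i\}}(\graph''))$ with witnesses $z' \to w'$ and $z'' \to w''$, the pair $(z',z'') \in Z' \times Z''$ satisfies $(z',z'') \to (w',w'')$ in $\graph$ by the definition of edges in the product.

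Finally, taking cardinalities and using $|Z' \times Z''| = |Z'| \cdot |Z''|$ gives
\begin{equation*}
\beta_i(Z' \times Z'') \;=\; \frac{|\im_{\graph'}(Z', U_{\{i\}}(\graph'))| \cdot |\im_{\graph''}(Z'', U_{\{i\}}(\graph''))|}{|Z'|\,|Z''|} \;=\; \beta_i(Z')\,\beta_i(Z''),
\end{equation*}
as required. There is no real obstacle here; the only point that warrants care is the observation in the first step that paths from layer $0$ to layer $1$ are single edges, which is what makes the product edge rule translate cleanly into a product structure on images. For higher layers $U_\I$ with $|\I| \geq 2$ one would need to pick compatible intermediate vertices on both coordinates, and the multiplicativity would fail in general; fortunately that is not needed for the $\beta_i$.
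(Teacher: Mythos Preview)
Your proof is correct and follows the same route as the paper: both establish the set identity $\im_\graph(Z'\times Z'', U_{\{i\}}(\graph)) = \im_{\graph'}(Z', U_{\{i\}}(\graph')) \times \im_{\graph''}(Z'', U_{\{i\}}(\graph''))$ and then take cardinalities. You supply the two inclusions explicitly (and note why the single-edge observation is what makes it work), whereas the paper simply asserts the identity without further justification.
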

\begin{proof}
  We have $V_1(\ensuremath{\mathcal{G}}^\prime) = \biguplus_{i=1}^h U^\prime_{\{i\}}$ and $V_1(\ensuremath{\mathcal{G}}^{\prime\prime}) = \biguplus_{i=1}^h U^{\prime\prime}_{\{i\}}$. The way $\ensuremath{\mathcal{G}}$ is constructed gives $V_1(\ensuremath{\mathcal{G}}) = \biguplus_{i=1}^h (U^\prime_{\{i\}} \times U^{\prime\prime}_{\{i\}})$. Note that
\[
  \mathrm{Im}_{\ensuremath{\mathcal{G}}} (Z^\prime \times Z^{\prime\prime} , U^\prime_{\{i\}} \times U^{\prime\prime}_{\{i\}}  )  =   \mathrm{Im}_{\ensuremath{\mathcal{G}}^\prime} (Z^\prime  , U^\prime_{\{i\}})  \times \mathrm{Im}_{\ensuremath{\mathcal{G}}^{\prime\prime}} (Z^{\prime\prime}  , U^{\prime\prime}_{\{i\}}) .
\]
The claim follows by taking cardinalities:
\begin{align*}
\beta_i(Z^\prime \times Z^{\prime\prime})  &  = \frac{| \mathrm{Im}_{\ensuremath{\mathcal{G}}} (Z^\prime \times Z^{\prime\prime} , U^\prime_{\{i\}} \times U^{\prime\prime}_{\{i\}} )|}{|Z^\prime \times Z^{\prime\prime}|} \\
								&  = \frac{| \mathrm{Im}_{\ensuremath{\mathcal{G}}^\prime} (Z^\prime , U^\prime_{\{i\}})|}{|Z^\prime|}   \frac{| \mathrm{Im}_{\ensuremath{\mathcal{G}}^{\prime\prime}} ( Z^{\prime\prime} , U^{\prime\prime}_{\{i\}}  )|}{| Z^{\prime\prime}|}  \\
								& = \beta_i(Z^\prime)  \beta_i(Z^{\prime\prime}).
\end{align*}
\end{proof} 
Magnification ratio is multiplicative with respect to tensor product of layered graphs. However, for hypercube graphs this is only true for the top level magnification ratio, which is multiplicative for square commutative hypercube graphs. Square commutativity is not necessary, but it is sufficient (logically and for our purposes).
\begin{lemma}
\label{top level mag}
  Let $\ensuremath{\mathcal{G}}_1$ and $\ensuremath{\mathcal{G}}_2$ be square commutative hypercube graphs indexed by $Q_h$, and let $\ensuremath{\mathcal{G}}_3$ be the hypercube product $\ensuremath{\mathcal{G}}_1\otimes\ensuremath{\mathcal{G}}_2$.
Then $\mu_h(\ensuremath{\mathcal{G}}_3)=\mu_h(\ensuremath{\mathcal{G}}_1)\mu_h(\ensuremath{\mathcal{G}}_2)$.
\end{lemma}
\begin{proof}
For $i=1,2,3$, we will define an auxiliary layered graph $\hat{\ensuremath{\mathcal{G}}}_i$ as follows: $V_0(\hat{\ensuremath{\mathcal{G}}}_i):=V_0(\ensuremath{\mathcal{G}}_i)$, $V_1(\hat{\ensuremath{\mathcal{G}}}_i):=V_{h}(\ensuremath{\mathcal{G}}_i)$, and $(v,v^{\prime})\in E(\hat{\ensuremath{\mathcal{G}}}_i)$ if and only if there is a path from $v$ to $v^{\prime}$ in $\ensuremath{\mathcal{G}}_i$.
The proof rests on the following fact:
\begin{indexedclaim}
  $\hat{\ensuremath{\mathcal{G}}}_3 = \hat{\ensuremath{\mathcal{G}}}_1 \times \hat{\ensuremath{\mathcal{G}}}_2$.
\end{indexedclaim}
In words, $\hat{\ensuremath{\mathcal{G}}}_3$ is the directed layered tensor product of $\hat{\ensuremath{\mathcal{G}}}_1$ and $\hat{\ensuremath{\mathcal{G}}}_2$.
It should be noted here that this would not be the case if we were working with $i$th magnification ratios for  $1\leq i <h$, and that square commutativity is essential for our proof.
\begin{proof}[Proof of claim]
It suffices to show that for any pair of vertices $(u_0,v_0)$ in $V_0(\ensuremath{\mathcal{G}}_1)\times V_0(\ensuremath{\mathcal{G}}_2)$ and any pair of vertices $(u_h,v_h)$ in $V_h(\ensuremath{\mathcal{G}}_1)\times V_h(\ensuremath{\mathcal{G}}_2)$, we can find a sequence of index sets $\emptyset\to I_1\to\cdots\to I_h=\{1,\ldots,h\}$, and paths $u_0\to u_1\to\cdots\to u_h$  in $\ensuremath{\mathcal{G}}_1$ and $v_0\to v_1\to\cdots\to v_h$ in $\ensuremath{\mathcal{G}}_2$ such that $u_j\in U_{I_j}(\ensuremath{\mathcal{G}}_1)$ and $v_j\in U_{I_j}(\ensuremath{\mathcal{G}}_2)$.
This guarantees that the product path $(u_0,v_0)\to (u_1,v_1)\to\cdots\to (u_h,v_h)$ is contained in the hypercube product $\ensuremath{\mathcal{G}}_1\otimes\ensuremath{\mathcal{G}}_2$, hence the edge $(u_0,v_0)\to (u_h,v_h)$ is contained in $\hat{\ensuremath{\mathcal{G}}}_3$.

Let $u_0\to u_1\to\cdots\to u_h$ be any path in $\ensuremath{\mathcal{G}}_1$ from $u_0$ to $u_h$.
We will use square commutativity to show that there is a path $u_0\to u_1^{\prime}\to\cdots\to u_{h-1}^{\prime}\to u_h$ such that $u_j^{\prime}\in U_{\{1,\ldots,j\}}$.
Applying the same argument for a path $v_0\to v_1\to\cdots\to v_h$ will prove the claim.

For each $u_j$, let $I_j$ be the index set in $Q_h$ such that $u_j\in U_{I_j}(\ensuremath{\mathcal{G}}_1)$.
By definition, $u_j\to u_{j+1}$ only if there exists $i_{j+1}$ such that $I_{j+1}=I_j\cup\{i_{j+1}\}$.
Thus we may represent the sequence of index sets by a permutation:
\[
  \begin{pmatrix}
  1 & 2 &\cdots & h\\
  i_1& i_2&\cdots&i_h
\end{pmatrix}.
\]
Applying upward square commutativity to the sequence $I_{j-1}\to I_j\to I_{j+1}$ is equivalent to switching the pair $i_j$ and $i_{j+1}$. An example that illustrates this, is that by applying upward square commutativity to the layers $V_0$, $V_1$ and $V_2$ we transform the sequence $V_0 = U_\emptyset \to U_{\{i_1\}} \to U_{\{i_1,i_2\}}$ to $V_0 = U_\emptyset \to U_{\{i_2\}} \to U_{\{i_1,i_2\}}$ and so, in the permutation notation, we get 
\[
  \begin{pmatrix}
  1 & 2 & 3 & \cdots & h\\
  i_1& i_2  & i_3 & \cdots&i_h
\end{pmatrix}
\to 
\begin{pmatrix}
  1 & 2 & 3 & \cdots & h\\
  i_2& i_1& i_3 & \cdots&i_h
\end{pmatrix}.
\]
Thus by repeated application of upward square commutativity, we can find a path $u_0\to u_1^{\prime}\to u_2^{\prime}\to\cdots\to u_h$ such that $u_1^{\prime}\in U_{\{1\}}(\ensuremath{\mathcal{G}}_1)$.
Again by repeated application of square commutativity, we can find a path $u_0\to u_1^{\prime}\to u_2^{\prime \prime}\to u_3^{\prime \prime}\to\cdots\to u_h$ such that $u_2^{\prime \prime}\in U_{\{1,2\}}(\ensuremath{\mathcal{G}}_1)$, and so on.
\end{proof}

Now we continue with the proof of the lemma.
By definition of $\hat{\ensuremath{\mathcal{G}}}_i$, we have $\mu_1(\hat{\ensuremath{\mathcal{G}}}_i)=\mu_h(\ensuremath{\mathcal{G}}_i)$ for $i=1,2,3$.
Since $\hat{\ensuremath{\mathcal{G}}}_3$ is the layered product of $\hat{\ensuremath{\mathcal{G}}}_1$ and $\hat{\ensuremath{\mathcal{G}}}_2$, by the multiplicativity of magnification ratios of directed layered graphs (e.g. Theorem 7.1 in \cite{Nathanson1996}) we have $\mu_1(\hat{\ensuremath{\mathcal{G}}}_3)=\mu_1(\hat{\ensuremath{\mathcal{G}}}_1)\mu_1(\hat{\ensuremath{\mathcal{G}}}_2)$.
Thus $\mu_h(\ensuremath{\mathcal{G}}_3)=\mu_h(\ensuremath{\mathcal{G}}_1)\mu_h(\ensuremath{\mathcal{G}}_2)$, as desired.
\end{proof}
We are now ready to state and prove the theorem. 

\begin{theorem}[A Pl\"unnecke-type inequality for square commutative graphs]
  \label{Ruzsa analog}
Let $\ensuremath{\mathcal{G}}$ be a square commutative graph indexed by $Q_h$. Then for every $\emptyset \neq Z \subseteq V_0$ we have
\[
\mu_h(\ensuremath{\mathcal{G}}) \leq\beta_1(Z) \cdots \beta_h(Z).
\]
Moreover, 
\[
\mu_h(\ensuremath{\mathcal{G}}) \leq \left( \frac{\mu_1(\ensuremath{\mathcal{G}})}{h} \right)^h.
\]
\end{theorem}
\begin{proof}
As usual $\biguplus_{i=0}^h V_i$ is the vertex set of $\ensuremath{\mathcal{G}}$ and $V_1 = \biguplus_{i=1}^h U_{\{i\}}$.

$\ensuremath{\mathcal{G}}$ is a square commutative graph and so in particular is commutative. Applying Theorem \ref{Plunnecke} and Lemma  \ref{beta ineq 1} successively gives: 
 \[
  \mu_h(\ensuremath{\mathcal{G}}) \leq \mu_1(\ensuremath{\mathcal{G}})^h \leq \left( \sum_{i=1}^h \beta_i(Z) \right)^h,
\]
for all $\emptyset \neq Z \subseteq V_0.$
A first improvement is as follows.
\begin{indexedclaim}
\label{beta ineq 2}
For all $\emptyset \neq Z \subseteq V_0$, we have
\begin{align} \label{beta ineq 2 eqn}
  \mu_h(\ensuremath{\mathcal{G}}) \leq \left( \max_{1\leq i\leq h}\beta_i(Z)\right)^h.
\end{align}  
\end{indexedclaim}
To prove Claim \ref{beta ineq 2} we use the tensor product trick (\cite{Ruzsa2010}, see also \cite{TaoBlogTensor}). Let $n$ be any positive integer. We let $\ensuremath{\mathcal{G}}^n=\ensuremath{\mathcal{G}}\otimes\cdots\otimes\ensuremath{\mathcal{G}}$ denote the $n$-fold hypercube product of $\ensuremath{\mathcal{G}}$ with itself and $S^n \subseteq V_i(\ensuremath{\mathcal{G}}^n)$ the subset of $V_i(\ensuremath{\mathcal{G}}^n)$ that is precisely the $n$-fold product of $S$ with itself. 

By Lemma \ref{top level mag} and Theorem \ref{Plunnecke} \& Lemma \ref{SqComProd} we get that for all positive integers $n$
\[
 \mu_h(\ensuremath{\mathcal{G}})^n   =    \mu_h(\ensuremath{\mathcal{G}}^n)  \leq  \mu_1(\ensuremath{\mathcal{G}}^n)^h.
\]
By Lemma \ref{beta ineq 1} and Lemma \ref{beta ineq 3} we have $\mu_1(\ensuremath{\mathcal{G}}^n) \leq \sum_{i=1}^h \beta_i(Z^n) =  \sum_{i=1}^h \beta_i(Z)^n$ and so
\[
\mu_h(\ensuremath{\mathcal{G}}) \leq\left( \sum_{i=1}^h \beta_i(Z)^n \right)^{h/n}.
\]
Letting $n$ go to infinity proves Claim \ref{beta ineq 2}.

To deduce the first inequality in the statement of the theorem we use a trick of Ruzsa (e.g. \cite{Ruzsa2009}), which appears in his proof of Theorem \ref{Ruzsa} and is similar to that used in the deduction of Theorem \ref{Main} from Proposition \ref{Main equal}.

We begin by recalling that $Z$ is fixed. Let $T_1,\ldots, T_h$ be pairwise disjoint sets of generators of a free abelian group with identity $0$.
For now we leave $n_i=|T_i|$ undetermined, but note that they will depend on $Z$.

Let $\ensuremath{\mathcal{T}}$ denote the addition graph $\ensuremath{\mathcal{G}}_+(\{0\},T_1,\ldots,T_h)$ and let $\ensuremath{\mathcal{G}}^\prime=\ensuremath{\mathcal{G}}\otimes\ensuremath{\mathcal{T}}$. The subsets of $V_0(\ensuremath{\mathcal{G}}^\prime)$ are of the form $S\times \{0\}$ for $S\subseteq V_0.$ 

Combining Claim \ref{beta ineq 2} with Lemma \ref{beta ineq 3} gives 
\[
\mu_h(\ensuremath{\mathcal{G}}^\prime)  \leq   \left( \max_{1\leq i\leq h}\beta_i(Z\times\{0\})\right)^h  = \left( \max_{1\leq i\leq h}\beta_i(Z)   \beta_i(\{0\}) \right)^h = \left( \max_{1\leq i\leq h}\beta_i(Z)   n_i \right)^h .
\]

We now chose the value of the $n_i$. The $\beta_i(Z)$ are rational numbers so we set $\beta_i(Z) = p_i / q_i$ and $n=q_1 \cdots q_h$. By choosing $n_i= n \prod_{j\not=i}\beta_j(Z)$ we have $\beta_i(Z)n_i= n \prod_{\ell =1}^h n_\ell = \beta_j(Z)n_j$ for all $i,j=1,\ldots,n$.
Thus
\[
  \left(\max_{1\leq i\leq k}\beta_i(Z)   n_i\right)^h=\prod_{i=1}^h \beta_i(Z) n_i.
\]
On the other hand Lemma \ref{top level mag} gives
\[
 \mu_h(\ensuremath{\mathcal{G}}^\prime) = \mu_h(\ensuremath{\mathcal{G}})   \mu_h(\ensuremath{\mathcal{T}}) = \mu_h(\ensuremath{\mathcal{G}})   |T_1+\dots + T_h| = \mu_h(\ensuremath{\mathcal{G}})   n_1\dots n_h.
\]
Combining the above proves the first inequality in the statement of the theorem:
\[
\mu_h(\ensuremath{\mathcal{G}}) = \frac{\mu_h(\ensuremath{\mathcal{G}}^\prime)}{n_1\dots n_h} \leq \frac{\left( \max_{1\leq i\leq h}\beta_i(Z)   n_i \right)^h}{n_1\dots n_h} = \frac{ \prod_{i=1}^h \beta_i(Z) n_i}{n_1\dots n_h} = \prod_{i=1}^h \beta_i(Z).
\]
To get the second inequality in the statement of the theorem we first apply the arithmetic mean - geometric mean inequality and get 
\[
\mu_h(\ensuremath{\mathcal{G}}) \leq \beta_1(Z) \cdots \beta_h(Z) \leq \left( \frac{1}{h} \sum_{i=1}^h \beta_i(Z) \right)^h .
\]  
The last step is to let $\emptyset \neq X \subseteq V_0$ be the subset that achieves the first magnification ratio $\mu_1(\ensuremath{\mathcal{G}})$ i.e., $\displaystyle \mu_1(\ensuremath{\mathcal{G}}) =   \frac{|\mathrm{Im}_\ensuremath{\mathcal{G}}(X,V_1)|}{|X|}$.  Lemma \ref{beta ineq 1} gives $ \sum_{i=1}^h \beta_i(X) = \mu_1(\ensuremath{\mathcal{G}})$ and we are done.
\end{proof}
A couple remarks of some interest. 

Considering $\ensuremath{\mathcal{G}} = \ensuremath{\mathcal{G}}_+(\{0\}, T_1,\dots, T_h)$ as constructed above with $|T_1|=\dots=|T_h|$ shows that the upper bound cannot be trivially improved. 

Theorem \ref{Plunnecke} follows from Theorem \ref{Ruzsa analog}. Let $\ensuremath{\mathcal{G}}$ be a commutative graph with vertex set $V_0, V_1, \dots, V_h.$ We construct a hypercube graph $\ensuremath{\mathcal{H}}$ as follows: $U_\ensuremath{I} = V_{| \ensuremath{I} |}$ and for every $\ensuremath{I} \to \ensuremath{I}^{\prime}$, $u \in U_\ensuremath{I}$ and $v \in \ensuremath{I}^{\prime},$ $uv \in E(\ensuremath{\mathcal{H}})$ if and only if $uv \in E(\ensuremath{\mathcal{G}}).$

One may think of of the $i$th layer of $\ensuremath{\mathcal{H}}$ as consisting of $\tbinom{h}{i}$ copies of $V_i$ and the set of edges between $U_\ensuremath{I}$ and $U_{\ensuremath{I}^{\prime}}$ is a copy of the set of edges between $V_{|I|}$ and $V_{|I^{\prime}|}$, whenever $I \to I^{\prime}.$

A routine calculation confirms that $\ensuremath{\mathcal{H}}$ is square commutative, that $\mu_h(\ensuremath{\mathcal{H}}) = \mu_h(\ensuremath{\mathcal{G}})$ and that $\mu_1(\ensuremath{\mathcal{H}}) = h \mu_1(\ensuremath{\mathcal{G}}).$ Therefore,
\[
\mu_h(\ensuremath{\mathcal{G}}) = \mu_h(\ensuremath{\mathcal{H}}) \leq  \left( \frac{\mu_1(\ensuremath{\mathcal{H}})}{h} \right)^h =   \mu_1(\ensuremath{\mathcal{G}}) ^h.
\]

\subsection{A stronger Pl\"unnecke-type inequality for square commutative graphs}
\label{Strong Plunnecke-type}

Theorem \ref{Ruzsa analog} has one unsatisfactory aspect from a technical point of view: it does not provide any information on the subset $\emptyset \neq Z \subseteq V_0$ that achieves $\mu_h(\ensuremath{\mathcal{G}})$ i.e., the subset that satisfies $\displaystyle \mu_h(\ensuremath{\mathcal{G}}) =   \frac{|\mathrm{Im}_\ensuremath{\mathcal{G}}(Z,V_h)|}{|Z|}$. We strengthen Theorem \ref{Ruzsa analog} by proving that the subset $\emptyset \neq X \subseteq V_0$ that achieves $\mu_1(\ensuremath{\mathcal{G}})$ has restricted growth and in fact satisfies the bound given in Theorem \ref{Ruzsa analog}. A similar result was proved for commutative graphs in \cite{GPCaSu}.

\begin{theorem}\label{Stronger Ruzsa analog}
Let $\ensuremath{\mathcal{G}}$ be a square commutative graph with vertex set $V_0\cup\dots\cup V_h$. Suppose that $\emptyset \neq X \subseteq V_0$ achieves $\mu_1(\ensuremath{\mathcal{G}})$ i.e., $\displaystyle \mu_1(\ensuremath{\mathcal{G}}) =   \frac{|\mathrm{Im}_\ensuremath{\mathcal{G}}(X,V_1)|}{|X|}$. Then 
\[
|\mathrm{Im}_\ensuremath{\mathcal{G}}(X,V_h)| \leq \left( \frac{\mu_1(\ensuremath{\mathcal{G}})}{h}\right)^h |X|.
\]  
\end{theorem}
\begin{proof}
We work in the channel $\ensuremath{\mathcal{G}}^\prime = \ensuremath{\overline{\ensuremath{\mathcal{G}}}}(X,V_h)$ rather than the original square commutative graph. In this context we will prove that if $\ensuremath{\mathcal{G}}^\prime$ is a commutative graph with vertex set $V^\prime_0 \cup \dots \cup V^\prime_h$, which satisfies $\mu_1(\ensuremath{\mathcal{G}}^\prime)=|V^\prime_1| / |V^\prime_0|$, then 
\[
|V^\prime_h| \leq \left( \frac{\mu_1(\ensuremath{\mathcal{G}}^\prime)}{h}\right)^h  |V^\prime_0|.
\]

Suppose not. Let $\ensuremath{\mathcal{G}}^\prime$ be a counterexample where $|V^\prime_0|$ is minimal. Theorem \ref{Ruzsa analog} implies that the collection 
\[
\left\{ \emptyset \neq Z \subseteq V^\prime_0 : |\mathrm{Im}_{\ensuremath{\mathcal{G}}^\prime}(Z,V^\prime_h)| \leq  \left( \frac{\mu_1({\ensuremath{\mathcal{G}}^\prime})}{h}\right)^h |Z| \right\}
\]
is nonempty. 

Let $S\subsetneq V^\prime_0$ be a set of maximal cardinality in the collection ($S$ cannot equal $V_0^\prime$ as we have assumed that $\ensuremath{\mathcal{G}}^\prime$ is a counter example) and $\ensuremath{\mathcal{H}} = \ensuremath{\overline{\ensuremath{\mathcal{G}}^\prime}}(V^\prime_0 \setminus S , V^\prime_h \setminus \mathrm{Im}_{\ensuremath{\mathcal{G}}^\prime}(S,V^\prime_h))$. In words $\ensuremath{\mathcal{H}}$ is the channel consisting of paths in ${\ensuremath{\mathcal{G}}^\prime}$ that do not start in $S$ and do not end in its image in $V^\prime_h$.  Suppose that $W_0\cup W_1\cup\dots\cup W_h$ are the layers of $\ensuremath{\mathcal{H}}$. Observe also that for all $Z\subseteq W_0$ and all $i=1,\dots,h$ we have $\mathrm{Im}_\ensuremath{\mathcal{H}}(Z,W_i) = \mathrm{Im}_{\ensuremath{\mathcal{G}}^\prime}(Z,W_i)$.

$W_1$ does not intersect $\mathrm{Im}_{\ensuremath{\mathcal{G}}^\prime}(S,V^\prime_1)$ as there would
then exist a path in $\ensuremath{\mathcal{H}}$ leading to $\mathrm{Im}_{\ensuremath{\mathcal{G}}^\prime}(S,V^\prime_h)$. We therefore
have  
\begin{align*}
|W_1| &  \leq|V^\prime_1|-|\mathrm{Im}_{\ensuremath{\mathcal{G}}^\prime}(S,V^\prime_1)| \\ 
	  &  \leq |V^\prime_1| - \mu_1({\ensuremath{\mathcal{G}}^\prime}) |S| \\
	  & = |V^\prime_1| -\frac{|V^\prime_1|}{|V^\prime_0|}  |S| \\
	  & = |V^\prime_1|   \frac{|V^\prime_0|-|S|}{|V^\prime_0|} \\
	  & = |W_0|  \frac{|V^\prime_1|}{|V^\prime_0|} ,
\end{align*}
as $|W_0| = |V^\prime_0| - |S|$.
Consequently
\begin{equation}\label{mu_1 of S complement in Stronger Plunnecke}
 \mu_1(\ensuremath{\mathcal{H}})\leq \frac{|W_1|}{|W_0|}\leq \frac{|V^\prime_1|}{|V^\prime_0|}=\mu_1({\ensuremath{\mathcal{G}}^\prime}).
\end{equation}
Let $\emptyset \neq T\subseteq W_0$ be any subset that satisfies $|\mathrm{Im}_\ensuremath{\mathcal{H}}(T,W_1)| =
\mu_1(\ensuremath{\mathcal{H}})  |T|$. Let us get a lower bound on $|\mathrm{Im}_\ensuremath{\mathcal{H}}(T,W_h)|$. We
know from the maximality of $|S|$ that
\begin{align*}
\left(\frac{\mu_1({\ensuremath{\mathcal{G}}^\prime})}{h}\right)^{h}  |S\cup T| & <  |\mathrm{Im}_{\ensuremath{\mathcal{G}}^\prime}(S\cup T,V^\prime_h)| \\
                   			  			 & =   |\mathrm{Im}_{\ensuremath{\mathcal{G}}^\prime}(S,V^\prime_h)| + |\mathrm{Im}_{\ensuremath{\mathcal{G}}^\prime}(T,V^\prime_h) \setminus \mathrm{Im}_{\ensuremath{\mathcal{G}}^\prime}(S,V^\prime_h)|  \\
                   			   			 & =    |\mathrm{Im}_{\ensuremath{\mathcal{G}}^\prime}(S,V^\prime_h)|+|\mathrm{Im}_\ensuremath{\mathcal{H}}(T,W_h)|  \\
		 	                     			 & \leq   \left(\frac{\mu_1({\ensuremath{\mathcal{G}}^\prime})}{h}\right)^h   |S| + |\mathrm{Im}_\ensuremath{\mathcal{H}}(T,W_h)|.
\end{align*}
This implies
\begin{equation}\label{mu_h of T in Stronger Plunnecke}
 |\mathrm{Im}_\ensuremath{\mathcal{H}}(T,W_h)| > \left( \frac{\mu_1({\ensuremath{\mathcal{G}}^\prime})}{h}\right)^h  |T|.
\end{equation}
Finally we consider $\ensuremath{\mathcal{H}}^\prime = \ensuremath{\overline{\ensuremath{\mathcal{H}}}}(T,W_h)$, the channel consisting of all paths in $\ensuremath{\mathcal{H}}$ starting at $T$. $\ensuremath{\mathcal{H}}^\prime$ is a square commutative graph with layers $T_0 \cup\dots\cup T_h$ and magnification ratio $\mu_1(\ensuremath{\mathcal{H}}^\prime) = \mu_1(\ensuremath{\mathcal{H}})$. By inequalities \eqref{mu_h of T in Stronger Plunnecke} and \eqref{mu_1 of S complement in Stronger Plunnecke}  we get:
\begin{align*}
|T_h| & =  |\mathrm{Im}_{\ensuremath{\mathcal{H}}^\prime} (T,W_h)|  =  |\mathrm{Im}_\ensuremath{\mathcal{H}}(T,W_h)| \\ 
          & > \left(\frac{\mu_1({\ensuremath{\mathcal{G}}^\prime})}{h}\right)^h |T|  \geq  \left(\frac{\mu_1(\ensuremath{\mathcal{H}})}{h}\right)^h |T_0| \\
          & = \left(\frac{\mu_1(\ensuremath{\mathcal{H}}^\prime)}{h}\right)^h |T_0|.
\end{align*}
Thus $\ensuremath{\mathcal{H}}^\prime$ is another counterexample. However, $|T_0|=|T| \leq |W_0| = |V^\prime_0 \setminus S| < |V^\prime_0|$, which contradicts the minimality of $|V^\prime_0|$.
\end{proof}

\subsection{Application to sumsets with a component removed}
\label{Ruzsa restricted}

Our final task is to deduce from Theorem \ref{Stronger Ruzsa analog} the upper bound on sumsets with a component removed, which was used in Section \ref{Proof of main} .
\begin{corollary}\label{Restricted sumsets}
Let $h$ be a positive integer. Suppose that $A, B_1,\dots,B_h$ are finite sets in a commutative group and $E\subseteq A$ a subset of $A$. 

If $\emptyset\neq X \subseteq A\setminus E$ is a subset of $A\setminus E$ that minimises the quantity
\[
\sum_{i=1}^h \frac{ |(Z+B_i)\setminus (E+B_i)|}{|Z|}
\]
over all non-empty subsets $\emptyset \neq Z \subseteq A\setminus E$, then
\[
|(X+B_1+\dots+ B_h)   \setminus (E+B_1+\cdots + B_h)| \leq \mu^h   |X|,
\]
where
\[
\mu: = \mu(X) = \frac{1}{h}  \sum_{i=1}^h \frac{ |(X+B_i)\setminus (E+B_i)|}{|X|} .
\]
\end{corollary}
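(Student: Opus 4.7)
My plan is to deduce this from Theorem \ref{Stronger Ruzsa analog} by constructing an appropriate square commutative hypercube graph $\graph^*$. Starting from the addition graph $\graph_+(A,B_1,\ldots,B_h)$, I delete from each layer the ``$E$-shadow,'' setting
\[
U_\I(\graph^*) \;:=\; \Bigl(A+\sum_{i\in\I}B_i\Bigr)\setminus\Bigl(E+\sum_{i\in\I}B_i\Bigr)
\]
and keeping exactly those edges of $\graph_+$ with both endpoints still present. The bottom and top layers then are $A\setminus E$ and $(A+B_1+\cdots+B_h)\setminus(E+B_1+\cdots+B_h)$ respectively, which are precisely the sets relevant to the corollary.

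The main obstacle will be verifying that $\graph^*$ inherits square commutativity from $\graph_+$, since deleting vertices can a priori destroy the property. To check the upward condition at $\I\to\I'\to\I''$ (with $\I'=\I\cup\{j\}$ and $\I'_c=\I\cup\{k\}$), take $v\in U_\I(\graph^*)$, $v'=v+b_j\in U_{\I'}(\graph^*)$ and distinct $v''_s=v'+b_k^{(s)}\in U_{\I''}(\graph^*)$; the natural candidates $v'_s:=v+b_k^{(s)}$ are distinct, lie in $A+\sum_{i\in\I'_c}B_i$, and satisfy $v\to v'_s\to v''_s$. The only worry is that $v'_s$ might fall into $E+\sum_{i\in\I'_c}B_i$, but if $v'_s=e+c_k+\sum_{i\in\I}c_i$ with $e\in E$, then
\[
v''_s \;=\; v'_s + b_j \;=\; e+b_j+c_k+\sum_{i\in\I}c_i \;\in\; E+\sum_{i\in\I''}B_i,
\]
contradicting $v''_s\in U_{\I''}(\graph^*)$. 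Downward square commutativity is proved by the symmetric argument with candidate $v'_s:=v_s+b_k$.

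Once square commutativity is in place, the remainder is bookkeeping. For $\es\neq Z\subseteq V_0(\graph^*)=A\setminus E$ a direct calculation shows $\im_{\graph^*}(Z,U_{\{i\}}(\graph^*))=(Z+B_i)\setminus(E+B_i)$, so Lemma \ref{beta ineq 1} identifies
\[
D_1(\graph^*) \;=\; \min_Z\sum_{i=1}^h\frac{|(Z+B_i)\setminus(E+B_i)|}{|Z|} \;=\; h\mu,
\]
and the minimum is attained at the set $X$ from the statement. Moreover $\im_{\graph^*}(X,V_h(\graph^*))$ coincides with $(X+B_1+\cdots+B_h)\setminus(E+B_1+\cdots+B_h)$: the inclusion $\subseteq$ is immediate, and for $\supseteq$ any $z=x+b_1+\cdots+b_h\notin E+B_1+\cdots+B_h$ admits the naive path $x\to x+b_1\to\cdots\to z$ inside $\graph^*$, since a partial sum $x+b_1+\cdots+b_k\in E+\sum_{i=1}^k B_i$ would immediately force $z\in E+B_1+\cdots+B_h$, a contradiction. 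Applying Theorem \ref{Stronger Ruzsa analog} to $\graph^*$ at the minimizer $X$ then gives
\[
|(X+B_1+\cdots+B_h)\setminus(E+B_1+\cdots+B_h)| \;=\; |\im_{\graph^*}(X,V_h)| \;\leq\; \Bigl(\tfrac{D_1(\graph^*)}{h}\Bigr)^{h}|X| \;=\; \mu^h\,|X|,
\]
as desired.
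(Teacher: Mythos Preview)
Your proof is correct and follows essentially the same approach as the paper: build the hypercube graph with vertex parts $U_\I=(A+\sum_{i\in\I}B_i)\setminus(E+\sum_{i\in\I}B_i)$, identify $D_1$ with $h\mu$ and $\im(X,V_h)$ with the restricted sumset, and apply Theorem~\ref{Stronger Ruzsa analog}. The only difference is that you verify square commutativity of $\graph^*$ by hand, whereas the paper identifies $\graph^*$ with the channel $\channel\bigl(A\setminus E,\,(A+\sum B_i)\setminus(E+\sum B_i)\bigr)$ inside $\graph_+(A,B_1,\ldots,B_h)$ and invokes Lemma~\ref{ChSqCom}; your direct check is a perfectly good substitute and avoids having to argue that the two graphs coincide.
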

\begin{proof}
We work in to the hypercube graph $\ensuremath{\mathcal{G}}$ indexed by $Q_h$ with vertex set given by $U_{\ensuremath{I}} = (A + \sum_{i\in I} B_i) \setminus  (E + \sum_{i\in I} B_i)$; and edge set determined as follows: an edge exists between $u\in V_I$ and $v\in V_{I\cup\{j\}}$ if $v-u \in B_j.$

$\ensuremath{\mathcal{G}}$ is square commutative by Lemma \ref{ChSqCom}, because it is precisely the channel 
\[
\ensuremath{\overline{\ensuremath{\mathcal{G}}}}\left(A\setminus E  ,  (A + \sum_{i=1}^h B_i) \setminus  (E + \sum_{i=1}^h B_i)\right)
\]
in the square commutative addition graph $\ensuremath{\mathcal{G}}_+(A,B_1,\dots,B_h)$.

Identifying $Z\subseteq A\setminus E$ with the corresponding subset of $V_0(\ensuremath{\mathcal{G}})$ gives
\[
\sum_{i=1}^h \frac{ |(Z+B_i) \setminus (E+B_i)|}{|Z|} = \frac{|\mathrm{Im}_\ensuremath{\mathcal{G}}(Z, V_1)|}{|Z|}.
\]
In particular the defining property of $X$ implies that $X$ achieves $\mu_1(\ensuremath{\mathcal{G}})$ and so 
\begin{align*}
\frac{\mu_1(\ensuremath{\mathcal{G}})}{h} & = \frac{1}{h}   \frac{|\mathrm{Im}_\ensuremath{\mathcal{G}}(X, V_1)|}{|X|} \\
							        & =  \frac{1}{h}  \sum_{i=1}^h \frac{ |\mathrm{Im}_\ensuremath{\mathcal{G}}(X, U_{\{i\}})| }{|X|}  \\
							        & = \frac{1}{h}  \sum_{i=1}^h \frac{ |(X+B_i)\setminus (E+B_i)| }{|X|} \\
							        & = \mu.
\end{align*}

The condition in Theorem \ref{Stronger Ruzsa analog} is satisfied and so  
\[
|(X+B_1+\dots+ B_h)   \setminus (E+B_1+\cdots + B_h)| = \mathrm{Im}_\ensuremath{\mathcal{G}}(X, V_h)   \leq \left( \frac{\mu_1(\ensuremath{\mathcal{G}})}{h}  \right)^h = \mu^h   |X|,
\]
as claimed.
\end{proof}

\subsection*{Acknowledgements}
The authors would like to thank the referee for a careful reading of the manuscript and helpful suggestions; and Dimitris Koukoulopoulos for spotting an omission  in the statement of Lemma~\ref{top level mag}. The second author would like to thank the University of Rochester for their funding during his postdoc there where the bulk of this project was completed. The third author would like to thank Imre Ruzsa for his help in simplifying some of the proofs in the related paper \cite{GPCaSu}, which made the proof of the main result of the present paper simpler than it would have otherwise been.

\phantomsection

\addcontentsline{toc}{section}{References}

\textsc{Department Mathematics, University of Rochester, Rochester, NY, USA}

\textit{Email addresses}: murphy@math.rochester.edu,   palsson@math.rochester.edu,  giorgis@cantab.net

\end{document}